
\documentclass[10pt,reqno]{amsart}
\usepackage{verbatim,amscd, 
amsmath,amsthm,amssymb,amsfonts}
\usepackage{xcolor}

\numberwithin{equation}{section} 
\newtheorem{theorem}{Theorem}[section]
\newtheorem{corollary}[theorem]{Corollary}
\newtheorem{lemma}[theorem]{Lemma}
\newtheorem{proposition}[theorem]{Proposition}
\theoremstyle{definition}

\newtheorem{remark}{Remark}[section]
\newtheorem*{conjecture}{Conjecture}
\newtheorem*{acknow}{Acknowledgments}

\begin{document}
\title[Hamiltonian non-displaceability of Gauss images]{Hamiltonian non-displaceability  of Gauss images of isoparametric hypersurfaces}
\author{Hiroshi Iriyeh}
\address{Mathematics and Infomatics, College of Science, Ibaraki University,
Mito, 310-8512, Japan.
}
\email{hiroshi.irie.math@vc.ibaraki.ac.jp}
\author{Hui Ma}
\address{Department of Mathematical Sciences,
Tsinghua University, Beijing 100084, P.R. China}
\email{hma@math.tsinghua.edu.cn}
\author{Reiko Miyaoka}
\address{Mathematical Institute, Tohoku University, Sendai 980-8578, Japan}
\email{r-miyaok@m.tohoku.ac.jp}
\author{Yoshihiro Ohnita}
\address{Osaka City University Advanced Mathematical Institute \& Department of Mathematics, Osaka City University, 
Osaka, 558-8585, Japan}
\email{ohnita@sci.osaka-cu.ac.jp}
\thanks{
The authors were partly supported by JSPS Grant-in-Aid for Young Scientists (B) No.~24740049, 
NSFC grant No.~11271213, 
JSPS Grant-in-Aid for Scientific Research (B) No.~15H03616, 
JSPS Grant-in-Aid for Scientific Research (S) No.~23224002 and (C) No.~15K04851,
respectively.}

\subjclass[2010]{ 
Primary 53C40; Secondary 53C42, 53D12.}
\keywords{
isoparametric hypersurface, Gauss image, complex hyperquadric, 
Lagrangian submanifold, Hamiltonian non-displaceablity, Floer homology.}

\maketitle

\begin{abstract}
In this article we study the Hamiltonian non-displaceability 
of Gauss images of isoparametric hypersurfaces in the spheres 
as Lagrangian submanifolds embedded in complex hyperquadrics.
\end{abstract}

\section{Introduction}
\label{Sec:Intro}

The purpose of this article is to build a bridge between submanifold theory 
and symplectic geometry. 
Here, by submanifolds we mean  the family of isoparametric hypersurfaces, 
originated from geometric optics in Italy, 
which is systematically developed by \'{E}lie Cartan. 
The latter is the Lagrangian intersection theory proposed by  
A.~Weinstein and V.I.~Arnold. 
Needless to say, A.~Floer's contribution is quite important \cite{Floer88}.  
Recently, there has been intensive progress for the Lagrangian Floer theory 
of toric fibers.
In order to proceed the study, we need, on  one hand, 
a lot of concrete examples, and on the other hand, the common properties 
such examples share. 

The isoparametric hypersurfaces are compact oriented hypersurfaces embedded 
in the standard sphere with constant principal curvatures. 
They provide a rich class containing infinitely many 
both homogeneous and non-homogeneous examples. 
Their topological and differential geometric data are well investigated, 
and manageable in various calculations. 
M\"{u}nzner shows that the number $g$ of distinct constant principal curvatures 
$\kappa_1>\dots>\kappa_g$ 
must be $1, 2, 3, 4$ or $6$, and the multiplicities $m_i$ of $\kappa_i$ 
are related by $m_i=m_{i+2}$ $(i\ \mathrm{mod}\, g)$ 
\cite{Muenzner1}. 
Without loss of generality, we may assume $m_1\le m_2$.  
There are infinitely many non-homogeneous 
examples with $g=4$, \cite{Ozeki-Takeuchi}, \cite{Ferus-Karcher-Muenzner}. 
The theory has already been applied to various mathematical problems 
(e.g., \cite{Wang}, \cite{Ge-Xie}, \cite{Tang-Yan}).

Through the Gauss map $\mathcal{G}$, 
isoparametric hypersurfaces $N^n$ in  $S^{n+1}$ yield a nice class of Lagrangian submanifolds $L^n=\mathcal{G}(N^n)$
embedded in the complex hyperquadric $Q_n(\mathbb{C})$ $(n\geq 2)$, 
which is a rank two Hermitian symmetric space of compact type. 
B.~Palmer \cite{Palmer97} pointed out the minimality of  
$L$, and the second and the fourth authors 
discussed the Hamiltonian stability and related properties of $L$ 
\cite{Ma-Ohnita1}--\cite{Ma-OhnitaII}. 

As  a first step to obtain the Floer homology of $L$, which has been achieved 
in the cases  $g=1,2$ \cite{OhIII}, \cite{IST}, we investigate the Hamiltonian non-displaceability of  $L$. 
Here, a compact Lagrangian submanifold $L$ embedded  in a symplectic manifold $(M, \omega)$ is said to be 
\emph{Hamiltonian non-displaceable}, if $L\cap \varphi (L)\neq \emptyset$ for any $\varphi$ 
belonging to the group $\mathrm{Ham}(M,\omega)$ of all Hamiltonian diffeomorphisms of $M$. 

Our main result is as follows:

\begin{theorem}\label{Main Thm}
Let $N^n$ be a compact oriented isoparametric hypersurface of the standard sphere 
$S^{n+1}(1)$ in $\mathbb{R}^{n+2}$.
Then its Gauss image $L^n=\mathcal{G}(N^n)$ is Hamiltonian non-displaceable in $(Q_n(\mathbb{C}),\omega_{\textup{std}})$ 
except for the following few remaining cases: 
\[
\begin{array}{ll}
(g,n,m_1,m_2)=(3,3,1,1), 
&N=
\frac{SO(3)}{\mathbb{Z}_2+\mathbb{Z}_2},
\\
(g,n,m_1,m_2)=(4, 2k+2, 1,k),
&N=
\frac{SO(2)\times SO(k+2)}{\mathbb{Z}_2\times SO(k)}\  (k\geq 1),
\\
(g,n,m_1,m_2)=(6,6,1,1), 
&N=
\frac{SO(4)}{\mathbb{Z}_2+\mathbb{Z}_2}, 
\end{array}
\]
where $\omega_{\textup{std}}$ denotes the standard induced K\"ahler form of 
$Q_n(\mathbb{C})\subset {\mathbb C}P^{n+1}$. 
\end{theorem}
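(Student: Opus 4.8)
The plan is to establish Hamiltonian non-displaceability by computing (or bounding) a suitable version of Lagrangian Floer homology of $L=\mathcal{G}(N^n)$ with coefficients in a field, say $\Lambda$-coefficients or $\mathbb{Z}_2$-coefficients, and showing it is nonzero; by the standard theory (Floer, Oh, Fukaya--Oh--Ohta--Ono) nonvanishing of $HF(L,L)$ forces $L\cap\varphi(L)\neq\emptyset$ for every $\varphi\in\mathrm{Ham}(Q_n(\mathbb{C}),\omega_{\mathrm{std}})$. The essential structural input is that $L$ is a \emph{monotone} Lagrangian submanifold of $Q_n(\mathbb{C})$: this follows from the minimality of $L$ (Palmer) together with the fact that $Q_n(\mathbb{C})$ is monotone (Kähler--Einstein with positive Einstein constant), so the symplectic area and the Maslov index are proportional on $\pi_2(Q_n,L)$. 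One must first pin down the monotonicity constant and, crucially, the minimal Maslov number $N_L$; the relevant Maslov-index and area computations reduce, via the M\"unzner structure, to the data $(g,m_1,m_2)$ and are carried out using the known description of $L$ as $N^n/\mathbb{Z}_g$ and the normal geometry of $N^n$ in $S^{n+1}$. For the monotone Floer homology to be defined one needs $N_L\geq 2$, and the generic case will have $N_L=2g$ or a small divisor thereof.

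Next I would set up the Floer/Morse--Bott complex for the pair $(L,L)$. The most efficient route is to use the $\mathbb{Z}_2$-equivariant or twisted coefficients dictated by the (non-trivial, in general) fundamental group $\pi_1(L)$ and the relevant local system, and to compute $HF(L,L)$ via a spectral sequence whose $E_2$ page is (a version of) the singular homology $H_*(L;\Lambda)$ with the differential built from Maslov-index-$N_L$ holomorphic discs — the Oh spectral sequence. The key point is: if this spectral sequence degenerates, or more simply if the Euler characteristic / total rank obstruction cannot kill everything, then $HF(L,L)\neq 0$ and we are done. Here one exploits the rich and explicitly known topology of Gauss images of isoparametric hypersurfaces (their $\mathbb{Z}_2$-cohomology was computed in the authors' earlier work \cite{Ma-Ohnita1}--\cite{Ma-OhnitaII}) to show that $H_*(L;\mathbb{Z}_2)$ is too large to be annihilated by a differential of degree $N_L-1$, except precisely in the low-dimensional cases listed, where $L$ is small (a point-like or few-cell space, e.g. related to $\mathbb{RP}^k$ or a low-dimensional flag-like manifold) and the disc-counting differential has a chance to wipe out the homology.

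Concretely, the main steps are: (i) verify monotonicity of $L$ and compute $N_L$ in terms of $(g,m_1,m_2)$; (ii) identify the correct coefficient system / local system on $L$ making $HF(L,L)$ well-defined and reducing the ambiguity from $\pi_1(L)$ and the orientation/spin data; (iii) recall the $\mathbb{Z}_2$-Betti numbers of $L$ from the earlier papers and feed them into the Oh-type spectral sequence $E_2=H_*(L;\mathbb{Z}_2)\Rightarrow HF(L,L)$; (iv) a rank count: since each page's differential lowers total rank by an even amount and has a fixed degree shift $N_L-1$, show that $\sum_i \dim H_i(L;\mathbb{Z}_2)$ is strictly larger than what the differentials can cancel, hence $HF(L,L;\mathbb{Z}_2)\neq 0$; (v) conclude Hamiltonian non-displaceability; (vi) separately inspect the three exceptional families, where the rank count fails, and either invoke the already-known $g=1,2$ results \cite{OhIII},\cite{IST} as a sanity check or simply record that the method gives no information there.

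The hard part will be step (iv) in borderline cases — where $N_L$ is large relative to $\dim L$ so there is essentially only one possible nonzero differential, and where the $\mathbb{Z}_2$-homology is ``balanced'' enough that a single differential could in principle be an isomorphism onto its image killing half the homology. Ruling this out requires either a parity/Poincar\'e-duality symmetry argument on $HF(L,L)$ (so that surviving classes come in pairs and an odd total rank forces nonvanishing), or a direct geometric argument that the relevant moduli space of Maslov-$N_L$ discs is empty or counts to zero mod $2$ — for instance because $L$ bounds no holomorphic disc of that index, which again is read off from the isoparametric/M\"unzner normal geometry. It is exactly the failure of such a parity or disc-vanishing argument that singles out the three exceptional quadruples in the theorem, and handling that boundary is where the real work lies.
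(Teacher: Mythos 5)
There is a genuine gap at the heart of your steps (iii)--(iv). For $g=4$ and $g=6$ the $\mathbb{Z}_2$-homology of the Gauss image $L=N/\mathbb{Z}_g$ is \emph{not} explicitly known --- the paper even lists its computation as an open problem --- so you cannot ``recall the $\mathbb{Z}_2$-Betti numbers of $L$ from the earlier papers and feed them into the Oh-type spectral sequence'': the input data for your plan does not exist. What is known (M\"unzner) is $H_*(N;\mathbb{Z}_2)$ for the isoparametric hypersurface $N$ itself. The missing idea is to replace ordinary Floer homology by Damian's \emph{lifted} Floer homology $HF^{\bar L}(L)$ attached to the covering $\bar L=N\to L$ (available since $N_L\geq 3$ in the relevant cases): Damian's spectral sequence has $E_1$-page built from $H_*(\bar L;\mathbb{Z}_2)\otimes\Lambda$, so M\"unzner's computation becomes usable, and nonvanishing of $HF^{\bar L}(L)$ still forces $L\cap\varphi(L)\neq\emptyset$ for every Hamiltonian diffeomorphism. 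This covering trick is exactly how the paper treats $g=4$, $m_1\geq 2$ and $g=6$, $m_1=m_2=2$; also note $N_L=2n/g$ ($=m_1+m_2$ for $g$ even, $2m_1$ for $g$ odd), not ``$2g$ or a small divisor thereof''.

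Moreover, even with the correct input your step (iv) as formulated would not close: the argument is not a coarse total-rank or Euler-characteristic count, and the ``balanced homology'' difficulty you flag is not resolved by Poincar\'e duality or by showing Maslov-$N_L$ disc counts vanish. Instead, one assumes $HF^{\bar L}(L)=0$, runs this through the spectral sequence (which collapses at the third page for $g=4$ and the fourth for $g=6$), uses the precise degrees in which $H_*(N;\mathbb{Z}_2)$ is concentrated to kill the flanking terms page by page, and finally extracts a specific exact sequence such as $H_1(\bar L;\mathbb{Z}_2)\to H_{m_1+m_2}(\bar L;\mathbb{Z}_2)\to H_{2(m_1+m_2)-1}(\bar L;\mathbb{Z}_2)$, which contradicts $H_{m_1+m_2}(N;\mathbb{Z}_2)\cong\mathbb{Z}_2\oplus\mathbb{Z}_2$ with vanishing neighbours precisely when $m_1\geq 2$ (this is also where the exceptional families with $m_1=1$ drop out, rather than by any disc-geometry obstruction). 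Finally, the $g=3$, $n=6,12,24$ cases are handled by a different and simpler mechanism: one first proves $L$ is a $\mathbb{Z}_2$-homology sphere (transfer for the $\mathbb{Z}_3$-covering plus an Euler-characteristic count, and a separate $H_1$ computation for $n=3$), and then Biran--Cornea's criterion --- $H_i(L;\mathbb{Z}_2)=0$ for all $i\equiv -1\ \mathrm{mod}\ N_L$ implies $HF(L)\cong H_*(L;\mathbb{Z}_2)\otimes\Lambda$ --- yields wideness directly, with no spectral-sequence cancellation analysis needed there.
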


Here we should mention the preceding results and a remark. 
We know that any Gauss image $L= \mathcal{G}(N)$ is monotone. 
A compact monotone Lagrangian submanifold $L$ in $(M,\omega)$ is said to be \emph{wide} if the Floer homology $HF(L)$ satisfies 
\[
HF(L)\cong H_{*}({L;\mathbb{Z}_2})\otimes \Lambda,
\] 
and \emph{narrow} if $HF(L)=0$.
See Sections \ref{Sec:Gauss images}  and \ref{Sec: Damian} for details. 
Notice here if $L$ is wide, then $L$ is Hamiltonian non-displaceable.
Y.G.~Oh \cite{OhIII} showed that $L$ is wide for any real form $L$ of a compact irreducible Hermitian symmetric space 
$M$ (see also \cite{IST}). 
In particular, all real forms of $Q_n(\mathbb{C})$, consisting of the Lagrangian sphere $S^n$ and the real quadric 
$(S^k\times S^{n-k})/\mathbb{Z}_2$ ($1\le k\le n-1$), are wide. 
Note that they coincide with the Gauss image of isoparametric hypersurfaces 
with  $g=1$ and $2$, respectively. 
Moreover, 
when $g=3$ and $n=6,12$ or $24$, the Gauss image $L$ is wide, although $L$ is not a real form (Corollary \ref{cor:g=3}).
It is an interesting future problem to decide  $L$ is wide or not in the remaining cases.

The paper is organized as follows: 
In Section \ref{Sec:Gauss images}, we prepare fundamental properties of the Gauss images of isoparametric hypersurfaces 
in the standard sphere, especially the formula of their minimal Maslov number (Proposition \ref{MinMaslovGaussImage}). 
In Section \ref{Sec: Damian}, we recall Damian's lifted Floer homology and its spectral sequence \cite{Damian}, 
which is our main tool in the later sections.  
In Section \ref{Sec: g=3}, we examine the topology of Gauss images when $g=3$.
As a result we obtain that the Gauss image with $g=3$ is a $\mathbb{Z}_2$-homology sphere,
and it is wide if $n=6$, $12$ or $24$.
In Section \ref{Sec: g=4} and Section \ref{Sec: g=6}, 
we prove the Hamiltonian non-displaceability of Gauss images with $g=4$ or $6$ and $m_1\geq 2$, respectively.
In the final section, we propose several open problems and a conjecture.

Throughout this article any manifold is smooth and connected.

\begin{acknow}
This work was done mostly during the authors' short visits
to RIMS in Kyoto, Japan in 2014 and 2015, and TSIMF in Sanya, China, in 2014.  
The authors would like to express their sincere gratitude to both institutions 
for the excellent research circumstance and generous support. 
\end{acknow}

\section{Gauss images of isoparametric hypersurfaces}
\label{Sec:Gauss images}

Let $N^n$ be an oriented hypersurface embedded in the unit standard
sphere $S^{n+1}(1) \subset {\mathbb{R}}^{n+2}$. 
Denote by ${\mathbf x}$ the position vector of points of $N$ 
and ${\mathbf n}$ the unit normal vector field of $N$ in $S^{n+1}(1)$. 
It is a fundamental fact in differential geometry
that the \emph{Gauss map} defined by
\[
{\mathcal G}:N^{n}\ni{p} \longmapsto {\mathbf x}(p)\wedge
{\mathbf n}(p)\cong [{\mathbf x}(p)+\sqrt{-1}{\mathbf n}(p)]\in
\widetilde{Gr}_{2}({\mathbb{R}}^{n+2}) \cong Q_{n}({\mathbb{C}})
\]
is always a Lagrangian immersion into the complex hyperquadric
$Q_n({\mathbb{C}})$. Here the complex hyperquadric $Q_n({\mathbb{C}})$
is identified with the real Grassmann manifold
$\widetilde{Gr}_{2}({\mathbb{R}}^{n+2})$ of oriented $2$-dimensional
vector subspaces of ${\mathbb{R}}^{n+2}$, which has a symmetric space
expression $SO(n+2)/(SO(2)\times SO(n))$.

It follows from \cite{Palmer97} that the Gauss map ${\mathcal G}:
N^n \rightarrow Q_n({\mathbb{C}})$ from an isoparametric hypersurface $N^n$  in $S^{n+1}(1)$
is a {\em minimal Lagrangian immersion} into $Q_n(\mathbb{C})$.
Moreover, the \lq\lq{Gauss image}\rq\rq of $\mathcal G$ is a compact
minimal Lagrangian submanifold 
$L^n={\mathcal G}(N^n) \cong N^n /{\mathbb Z}_g$ \emph{embedded} in $Q_n({\mathbb{C}})$, 
where 
${\mathcal G}:N^n\rightarrow {\mathcal G}(N^n)=L^n$ is the covering
map with the Deck transformation group ${\mathbb Z}_g$
\cite{Ma-Ohnita1}, \cite{Ma-OhnitaCONM2010}, \cite{Ohnita10}. 

Let us briefly recall the notion of monotone Lagrangian submanifold.
Given a Lagrangian manifold $L$ in a symplectic manifold $(M,\omega)$,  two homomorphisms
$I_{\mu,L}: \pi_2 (M,L) \rightarrow \mathbb{Z}$ and $I_\omega : \pi_2(M,L)\rightarrow \mathbb{R}$ are defined as follows.
For a smooth map $u: (D^2, \partial D^2) \rightarrow (M,L)$ in the class $A\in \pi_2(M, L)$,  
there is a unique trivialization, up to homotopy,
of the pull-back bundle $u^{*}TM \cong D^2 \times \mathbb{C}^{n}$ as a symplectic bundle.
This gives a map $\tilde{u}$ from $S^1=\partial D^2$ to $\Lambda(\mathbb{C}^n)$, 
the set of Lagrangian vector subspaces in $\mathbb{C}^n$.
By using the Maslov class $\mu\in H^1(\Lambda(\mathbb{C}^n), \mathbb{Z})$, one can define
$I_{\mu,L}(A):=\mu(\tilde{u})$. 
Next $I_\omega$ is defined by $I_{\omega}(A):=\int_{D^2}u^*\omega$.
Then a Lagrangian submanifold $L$ in $M$ is said to be {\em monotone}, 
if there exists a constant $\lambda >0$ such that $$I_{\mu, L}=\lambda I_{\omega}.$$ 
Denote by $N_L\in \mathbb{Z}_+$ the positive generator of the image of $I_{\mu, L}$.
We call $N_L$ {\em the minimal Maslov number} of $L$.

We obtain the following properties of the Gauss image:

\begin{proposition}[\cite{Ohnita10}, \cite{Ma-OhnitaJDG}]\label{MinMaslovGaussImage}
The Gauss image $L^n=\mathcal{G}(N^n)$ 
of an isoparametric hypersurface $N^n$  in $S^{n+1}$ is  a compact monotone 
Lagrangian submanifold embedded in $Q_n(\mathbb{C})$, and its
minimal Maslov number $N_L$ is given by
\begin{equation}\label{minimalMaslov}
N_L=\frac{2n}{g}= 
\left\{
\begin{array}{ll}
m_1+m_2, & \hbox{ if } g  \hbox{ is even},  \\
2m_1, & \hbox{ if } g \hbox{ is odd}. 
\end{array}
\right. 
\end{equation}
The Gauss image $\mathcal{G}(N^n)$  is orientable if and only if $2n/g$ is even.
\end{proposition}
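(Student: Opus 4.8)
The plan is to reduce everything to a concrete lift of the Gauss map to the tautological circle bundle over $Q_n(\mathbb{C})$. Writing $\mathcal{G}(p) = [\mathbf{x}(p) + \sqrt{-1}\,\mathbf{n}(p)]$, I would set $\widehat{\mathcal{G}}(p) := \mathbf{x}(p) + \sqrt{-1}\,\mathbf{n}(p) \in \mathbb{C}^{n+2}$. Since $|\widehat{\mathcal{G}}(p)|^2 = |\mathbf{x}|^2 + |\mathbf{n}|^2 = 2$ and $\widehat{\mathcal{G}}(p)\cdot\widehat{\mathcal{G}}(p) = |\mathbf{x}|^2 - |\mathbf{n}|^2 + 2\sqrt{-1}\,\mathbf{x}\cdot\mathbf{n} = 0$ (the $\mathbb{C}$-bilinear extension of the Euclidean form), this maps $N^n$ into $\widehat{L} := \{\,z\in\mathbb{C}^{n+2} : z\cdot z = 0,\ |z| = \sqrt{2}\,\}$, which via $z\mapsto(\operatorname{Re}z,\operatorname{Im}z)$ is the Stiefel manifold $V_2(\mathbb{R}^{n+2}) = SO(n+2)/SO(n)$, and which is the unit circle bundle $p\colon\widehat{L}\to Q_n(\mathbb{C})$, $z\mapsto[z]$, of the tautological line bundle $\tau = \mathcal{O}_{\mathbb{C}P^{n+1}}(-1)|_{Q_n(\mathbb{C})}$. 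As $\mathbf{x}\colon N^n\to S^{n+1}(1)$ is an embedding, $\widehat{\mathcal{G}}$ is an embedding covering $\mathcal{G}\colon N^n\to L^n$.

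I would then isolate two geometric facts. \emph{(i) Horizontality.} Let $\lambda$ be the primitive of $p^{*}\omega_{\textup{std}}$ on $\widehat{L}$ obtained by restricting $\tfrac14\sum_k(x_k\,dy_k - y_k\,dx_k)$ (a direct computation shows $d$ of this restricts to $p^*\omega_{\textup{std}}$, with the normalization $\int_{\mathbb{C}P^1}\omega_{\textup{std}} = \pi$, and $\oint_{\textup{fibre}}\lambda = \pi$). For any curve $c$ in $N^n$ one finds $\lambda\bigl(\tfrac{d}{dt}\widehat{\mathcal{G}}(c(t))\bigr) = \tfrac14(\mathbf{x}\cdot\dot{\mathbf{n}} - \mathbf{n}\cdot\dot{\mathbf{x}}) = 0$, because $\dot{\mathbf{x}}$ is tangent to $N^n$ while $\mathbf{n}$ is normal to it in $S^{n+1}(1)$ and $\mathbf{x}\cdot\dot{\mathbf{n}} = -\dot{\mathbf{x}}\cdot\mathbf{n}$; thus $\widehat{\mathcal{G}}$ sends curves in $N^n$ to horizontal curves of the Chern connection of $\tau$. \emph{(ii) Deck transformation.} By M\"unzner's structure theory \cite{Muenzner1} the deck group $\mathbb{Z}_g$ of $\mathcal{G}\colon N^n\to L^n$ is generated by the time-$(2\pi/g)$ normal geodesic flow $\sigma$ of the isoparametric foliation; since $(\cos t\,\mathbf{x}_0 + \sin t\,\mathbf{n}_0) + \sqrt{-1}\,(-\sin t\,\mathbf{x}_0 + \cos t\,\mathbf{n}_0) = e^{-\sqrt{-1}\,t}(\mathbf{x}_0 + \sqrt{-1}\,\mathbf{n}_0)$ along a normal geodesic, this yields $\widehat{\mathcal{G}}\circ\sigma^{k} = e^{-2\pi\sqrt{-1}\,k/g}\,\widehat{\mathcal{G}}$.

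For monotonicity I would use that $Q_n(\mathbb{C})$ is K\"ahler--Einstein and Fano and that $L^n$ is minimal Lagrangian \cite{Palmer97}: this forces the Maslov and area homomorphisms on $\pi_2(Q_n(\mathbb{C}),L^n)$ to be proportional, $I_{\mu,L} = \tfrac{c}{\pi}I_{\omega}$, where $c$ is the Einstein constant, and with $\int_{\mathbb{C}P^1}\omega_{\textup{std}} = \pi$ one gets $c = 2n$, so the monotonicity constant is $\lambda_L = 2n/\pi$. To evaluate $N_L$, take $A\in\pi_2(Q_n(\mathbb{C}),L^n)$ with $k\in\mathbb{Z}_g$ the image of $\partial A$ under $\pi_1(L^n)\twoheadrightarrow\mathbb{Z}_g$, and represent $\partial A$ by a loop lifting to a path $\gamma$ in $N^n$ from $x_0$ to $\sigma^{k}x_0$. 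By (ii), $\widehat{\mathcal{G}}\circ\gamma$ runs in $\widehat{L}$ from $z_0 := \widehat{\mathcal{G}}(x_0)$ to $e^{-2\pi\sqrt{-1}\,k/g}z_0$; closing it with the fibre arc $s\mapsto e^{-2\pi\sqrt{-1}\,ks/g}z_0$ (which $p$ collapses to $\mathcal{G}(x_0)$) gives a loop in $\widehat{L}$. As $\widehat{L} = V_2(\mathbb{R}^{n+2})$ is simply connected for $n\ge2$ (it is the unit tangent bundle of $S^{n+1}$, with fibre $S^n$, both simply connected), this loop bounds a disk $\widehat{u}$, and $u := p\circ\widehat{u}$ is a relative disk for the loop. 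Stokes, horizontality of $\widehat{\mathcal{G}}\circ\gamma$, and $\oint_{\textup{fibre}}\lambda = \pi$ give $I_{\omega}([u]) = \oint_{\partial\widehat{u}}\lambda = \tfrac{\pi k}{g}$, hence $I_{\mu,L}([u]) = \tfrac{2n}{\pi}\cdot\tfrac{\pi k}{g} = \tfrac{2nk}{g}$; since $[u]$ differs from $A$ by a class in $\pi_2(Q_n(\mathbb{C}))$, on which $I_{\mu,L} = 2c_1$ takes values in $2n\mathbb{Z}$, I get $I_{\mu,L}(A)\equiv\tfrac{2nk}{g}\pmod{2n}$. Taking $k = 1$ realizes $2n/g$, and the congruence confines the image of $I_{\mu,L}$ to $\tfrac{2n}{g}\mathbb{Z}$; hence $N_L = 2n/g$. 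The two displayed cases are $n = \sum_{i=1}^g m_i$ combined with $m_i = m_{i+2}$: $n = \tfrac g2(m_1+m_2)$ for $g$ even, and all $m_i$ equal (so $n = g m_1$) for $g$ odd.

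Orientability is then a corollary: since $\pi_1(Q_n(\mathbb{C})) = 0$, the map $\partial\colon\pi_2(Q_n(\mathbb{C}),L^n)\to\pi_1(L^n)$ is onto, and for any relative disk $u$ one has $\langle w_1(L^n),\partial[u]\rangle \equiv I_{\mu,L}([u])\pmod 2$ — the Maslov index mod $2$ of the Lagrangian frame along $\partial u$ equals $w_1$ of the underlying real Lagrangian subbundle, independently of the bounding disk — so $w_1(L^n) = 0$ if and only if $N_L$ is even, i.e.\ if and only if $2n/g$ is even. I expect the genuine work to be concentrated in the second half of (ii): establishing, from M\"unzner's description of the isoparametric family and its focal sets, that the deck transformation is exactly the time-$(2\pi/g)$ normal flow and that the co-orientation propagates so that $\widehat{\mathcal{G}}$ picks up the \emph{primitive} $g$-th root of unity $e^{-2\pi\sqrt{-1}/g}$ rather than, say, $e^{-\pi\sqrt{-1}/g}$; once that point is settled, the remainder is bookkeeping with the connection $1$-form $\lambda$.
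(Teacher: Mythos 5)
The paper itself gives no proof of Proposition \ref{MinMaslovGaussImage} --- it is imported from \cite{Ohnita10} and \cite{Ma-OhnitaJDG} --- and your reconstruction is correct and follows essentially the same route as those references: the horizontal (Legendrian) lift ${\mathbf x}+\sqrt{-1}\,{\mathbf n}$ to the Stiefel manifold $V_2(\mathbb{R}^{n+2})$ viewed as the unit circle bundle of the tautological bundle, M\"unzner's identification of the deck group of $\mathcal{G}$ with the time-$(2\pi/g)$ normal geodesic flow (which is exactly where the factor $e^{-2\pi\sqrt{-1}/g}$, hence the value $2n/g$, comes from), and monotonicity with $I_{\mu,L}=\tfrac{c}{\pi}I_{\omega}$ from minimality of $L$ in the K\"ahler--Einstein quadric. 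The two points you rightly flag as carrying the real content are indeed the crux but are settled by the cited structure theory and do not affect the final integer: (a) along a normal geodesic the level sets of the Cartan--M\"unzner polynomial are met at $\pi/g$-spacing, but only every second intersection carries the continued co-orientation $\gamma_p'(t)$, so the fibre of $\mathcal{G}$ is $\{\gamma_p(2\pi k/g)\}_{k=0}^{g-1}$ and the primitive root $e^{-2\pi\sqrt{-1}/g}$ is the right one; and (b) the normalization of $\omega_{\mathrm{std}}$ only rescales $c$ and the monotonicity constant, while the congruence $I_{\mu,L}(A)\equiv 2nk/g \pmod{2n}$ survives even for $n=2$, where $2c_1$ on $\pi_2(Q_2)\cong\mathbb{Z}^2$ still takes values in $4\mathbb{Z}=2n\mathbb{Z}$.
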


\begin{remark} It is well-known that $1\leq N_L\leq 2n$ for a closed monotone Lagrangian submanifold $L$ 
in $Q_n(\mathbb{C})$.
The upper bound is sharp since the Lagrangian sphere $S^n\subset Q_n(\mathbb{C})$ 
attains $N_L=2n$.
The Gauss images of isoparametric hypersurfaces with $g=1$ and $g=2$ 
are $S^n$ and $(S^k\times S^{n-k})/\mathbb{Z}_2 (1\leq k\leq n-1)$, 
respectively. 
They provide real forms of $Q_n(\mathbb{C})$, which are totally geodesic. 
Notice that the minimal Maslov number of $(S^k\times S^{n-k})/\mathbb{Z}_2 \, (1\leq k\leq n-1)$ is equal to $n$.
We observe from  (\ref{minimalMaslov}) that Gauss images 
of isoparametric hypersurfaces with $g\geq 3$ provide examples of Lagrangian submanifolds in $Q_n(\mathbb{C})$ with  
relatively large minimal Maslov numbers (for the case of $\mathbb{C}P^n$, see \cite{Iriyeh14}). 
On the other hand, $N_L=2$ is taken only 
when  $m_1=m_2=1$, 
and such isoparametric hypersurfaces  are homogeneous 
\cite{Cartan}, \cite{Takagi}, \cite{Dorfmeister}. 
When $g\ge3$, all Gauss images with $N_L=2$ are given respectively \cite{Ma-OhnitaJDG}  by   
\[
L=
\left\{
\begin{array}{ll}
\frac{SO(3)}{(\mathbb{Z}_2+\mathbb{Z}_2)\mathbb{Z}_3}, &(g, m_1, m_2)=(3, 1, 1), \\
\frac{SO(2)\times SO(3)}{(\mathbb{Z}_2\times 1)\mathbb{Z}_4}, &(g, m_1, m_2)=(4, 1, 1),  \\
\frac{SO(4)}{(\mathbb{Z}_2+\mathbb{Z}_2)\mathbb{Z}_6},  &(g, m_1, m_2)=(6, 1, 1). 
\end{array}
\right.
\]
\end{remark}

\section{Damian's lifted Floer homology}
\label{Sec: Damian}

In this section we recall necessary facts from 
the Floer theory for monotone Lagrangian submanifolds.

\subsection{Lagrangian Floer homology}

Let $(M,\omega)$ be a closed symplectic manifold and let $L\subset (M,\omega)$ be 
a closed connected monotone embedded Lagrangian submanifold with $N_L\geq 2$.
Consider a Hamiltonian isotopy $\{\varphi _t\}_{0\leq t\leq 1}$ defined by a time-dependent Hamiltonian 
$H:[0,1]\times M \rightarrow \mathbb{R}$.
The time-one map $\varphi_1$ is called a {\it Hamiltonian diffeomorphism} of $M$.
Assume that $L \pitchfork \varphi _1(L)\neq \emptyset$. 
The Floer complex $CF(L)$ is defined as a free module over  $\mathbb{Z}_2$ generated by
all points of the intersection $L \cap \varphi _1(L)$.
Consider a time-dependent family 
$J=\{J_t\}_{0\leq t\leq 1}$ 
of almost complex structures on $M$ compatible with $\omega$. 
Floer's boundary operator $\partial_J : CF(L)\rightarrow CF(L)$ is defined by
counting (modulo 2) the isolated {\em $J$-holomorphic strips} $v:{\mathbb R} \times [0,1] \to M$ 
connecting pairs of points of $L\cap \varphi _1(L)$ with boundary in $L\cup \varphi _1(L)$.
For a generic choice of $(H, J)$, the homology  
$HF(L):=H_*(CF(L), \partial_J)$ 
is well-defined and called  the {\it Floer homology} of $L$ with $\mathbb{Z}_2$-coefficient,
which is invariant under the Hamiltonian isotopies of $L$ (see \cite{Floer88}, \cite{OhI} and \cite{Oh96} for more details).

Let us regard the intersection $\mathcal{C}:=L \cap \varphi_1(L)$ as points in $L$.
For any two points $p, q \in \mathcal{C}$, consider an isolated $J$-holomorphic strip 
$v:{\mathbb R} \times [0,1] \to M$ from $p$ to $q$, 
which defines a path $\gamma$ in $L$ from $p$ to $q$ by $\gamma(s):=v(s,0)$.
We denote by $\Gamma$ all the paths $\gamma$ given by this procedure.
Notice that to reconstruct the Floer complex $(CF(L),\partial_J)$ 
it is enough to know the above collection $(\mathcal{C},\Gamma)$ of points and paths.

\subsection{Damian's spectral sequence}

Now we recall the definition of Damian's lifted Floer homology for monotone Lagrangian submanifolds 
(see \cite{Damian} for details).
Let $(M,\omega)$ be a closed symplectic manifold and $L\subset M$ be 
a closed embedded monotone Lagrangian submanifold with $N_L\geq 3$.
We start with the data $(\mathcal{C},\Gamma)$ and fix an arbitrary covering $\pi:\bar{L}\rightarrow L$.
For any point $p \in \mathcal{C}$, denote by $\{ p_i \}_{i \in I}$ the elements of the fiber  $\pi^{-1}(p)$.
Consider the set $\bar{\Gamma}$ of all the lifts of the paths of $\Gamma$ to the covering space $\bar{L}$.
For any points $p_i,q_j \,(i,j \in I)$, the cardinality of elements in $\bar{\Gamma}$ which connect $p_i$ 
with $q_j$ is finite.
We denote by $n(p_i,q_j)$ its parity.
Let $CF^{\bar{L}}(L)$ be the free $\mathbb Z_2$-module generated by $\bigcup_{p \in \mathcal{C}}\pi^{-1}(p)$.
One can define the boundary operator $\partial^{\bar{L}}$ on $CF^{\bar{L}}(L)$ by the formula
\[
\partial^{\bar{L}}(p_i)=\sum_{\pi(q_j)=q \in \mathcal{C}} n(p_i,q_j)q_j. 
\]
Damian \cite[Proposition 2.6]{Damian} proved that $(CF^{\bar{L}}(L),\partial^{\bar{L}})$ 
is a complex and its homology $HF^{\bar{L}}(L):=H_*(CF^{\bar{L}}(L),\partial^{\bar{L}})$ 
is called the {\it lifted Floer homology} of $L$, which is also invariant under the Hamiltonian isotopies of $L$.

The following spectral sequence is an adaptation of Biran's construction (see \cite[Theorem 5.2.A]{Biran06}) 
to the lifted Floer homology, which is the main technical tool used in the sequel sections.

\begin{theorem}[Damian's spectral sequence, {\cite[Theorem 2.9]{Damian}}] \label{Th:Damian}
Denote by $\Lambda=\mathbb{Z}_2[T, T^{-1}]$ the algebra of Laurent polynomials over $\mathbb{Z}_2$ 
and $\Lambda^i\subset \Lambda$ the subspace of homogeneous elements of degree $i$. 
There exists a spectral sequence  $\{E^{p,q}_{r}, d_r\}$ which satisfies the following properties:
\begin{enumerate}
\item $E^{p,q}_0=C^{\bar{L}}_{p+q-pN_L} \otimes \Lambda^{pN_L}$, $d_0=[\partial_0^{\bar{L}}]\otimes 1$. 
\item $E^{p,q}_1=H_{p+q-pN_L}(\bar{L},\mathbb{Z}_2)\otimes \Lambda^{pN_L}$, $d_1=[\partial_1^{\bar{L}}]\otimes T^{-N_L}$,
where 
\[
[\partial_1^{\bar{L}}]: H_{p+q-pN_L}(\bar{L};\mathbb{Z}_2)\rightarrow H_{p+q-1-(p-1)N_L}(\bar{L};\mathbb{Z}_2)
\]
is induced by $\partial_1^{\bar{L}}$.
\item 
For any $r\geq 1$, $E^{p,q}_r$ has the form $E^{p,q}_r=V^{p,q}_r\otimes \Lambda^{pN_L}$ with 
$d_r=\delta_r\otimes T^{-rN_L}$, where each
$V^{p,q}_r$ is a vector space over $\mathbb{Z}_2$ and $\delta_r: V^{p,q}_r\rightarrow V^{p-r, q+r-1}_r$ is 
a homomorphism defined for every $p, q$ and satisfies
$\delta_r\circ \delta_r=0$. Moreover, 
\[
V^{p,q}_{r+1}=
\frac{\mathrm{Ker}(\delta_r: V^{p,q}_r \rightarrow V^{p-r, q+r-1}_r)}
{\mathrm{Im}(\delta_r: V^{p+r,q-r+1}_r \rightarrow V^{p,q}_r)}.
\]
In particular,  
$V^{p,q}_0=C^{\bar{L}}_{p+q-pN_L}$, 
$V_1^{p,q}=H_{p+q-pN_L}(\bar{L};\mathbb{Z}_2)$, $\delta_1=[\partial_1^{\bar{L}}]$. 
\item 
$E^{p,q}_r$ collapses at $(\nu+1)$-step and for any $p\in \mathbb{Z}$, 
$\oplus_{q\in \mathbb{Z}} E^{p,q}_{\infty} \cong HF^{\bar{L}}(L)$, 
where $\nu=[\frac{\dim L +1}{N_L}]$. 
\end{enumerate}
\end{theorem}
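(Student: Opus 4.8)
The plan is to realize $(CF^{\bar{L}}(L),\partial^{\bar{L}})$ as a bounded filtered complex and to read off its pages and differentials as in (1)--(4); this follows Damian's proof \cite[Theorem 2.9]{Damian}, which adapts Biran's construction \cite[Theorem 5.2.A]{Biran06} to the lifted setting. First I would choose the Hamiltonian perturbation defining $\varphi_1$ to be $C^2$-small, coming from a Morse function $f$ on $L$; then $\mathcal{C}=L\cap\varphi_1(L)$ is canonically identified with $\mathrm{Crit}(f)$, and by Gromov--Floer compactness together with monotonicity the isolated $J$-holomorphic strips counted by $\partial^{\bar{L}}$ split according to the total Maslov index $kN_L$ ($k\ge 0$) of the $J$-holomorphic curves appearing in their limit configurations. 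This yields a decomposition $\partial^{\bar{L}}=\sum_{k\ge 0}\partial_k^{\bar{L}}$, extending the notation $\partial_0^{\bar{L}},\partial_1^{\bar{L}}$ of the statement, with $\partial_0^{\bar{L}}$ using only the ``thin'' strips.

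Next I would identify the graded pieces. Over $\mathbb{Z}_2$ one writes $CF^{\bar{L}}(L)\cong C^{\bar{L}}_\ast\otimes_{\mathbb{Z}_2}\Lambda$, where $C^{\bar{L}}_\ast$ is the Morse complex of $\bar{f}:=f\circ\pi$ on the cover $\bar{L}$ (a $\mathbb{Z}_2$-chain model for $\bar{L}$), $\Lambda=\mathbb{Z}_2[T,T^{-1}]$ with $\deg T=N_L$, and $\deg(x\otimes T^p)=\mathrm{ind}_f(x)+pN_L$. The thin strips, by the standard Floer--Po\'zniak comparison applied fiberwise over $\pi\colon\bar{L}\to L$, are in bijection with gradient trajectories of $\bar{f}$, so $\partial_0^{\bar{L}}$ is the Morse differential and $H_\ast(C^{\bar{L}}_\ast,\partial_0^{\bar{L}})\cong H_\ast(\bar{L};\mathbb{Z}_2)$. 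A degree count shows $\partial_k^{\bar{L}}$ lowers the $T$-power by exactly $k$, so the filtration $F_p=\bigoplus_{j\le p}C^{\bar{L}}_\ast\otimes\Lambda^{jN_L}$ is $\partial^{\bar{L}}$-invariant, and the spectral sequence of $(CF^{\bar{L}}(L),\partial^{\bar{L}},F_\bullet)$ has $E^{p,q}_0=C^{\bar{L}}_{p+q-pN_L}\otimes\Lambda^{pN_L}$ with $d_0=[\partial_0^{\bar{L}}]\otimes 1$, hence $E^{p,q}_1=H_{p+q-pN_L}(\bar{L};\mathbb{Z}_2)\otimes\Lambda^{pN_L}$ with $d_1=[\partial_1^{\bar{L}}]\otimes T^{-N_L}$; the form $E^{p,q}_r=V^{p,q}_r\otimes\Lambda^{pN_L}$, $d_r=\delta_r\otimes T^{-rN_L}$ in (3), with $\delta_r\circ\delta_r=0$ and $V^{p,q}_{r+1}$ the stated subquotient, then comes for free from the general machinery of filtered complexes.

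For (4), convergence is immediate because the filtration is bounded in every total degree: in degree $d$ only the finitely many integers $p$ with $(d-n)/N_L\le p\le d/N_L$ contribute, as $C^{\bar{L}}_{d-pN_L}=0$ unless $0\le d-pN_L\le n=\dim L$; hence $\bigoplus_{q}E^{p,q}_\infty\cong HF^{\bar{L}}(L)$. The collapse bound is a degree count: $d_r$ shifts the chain degree $p+q-pN_L$ by $rN_L-1$, so its source and target both lie in $[0,n]$ only if $rN_L\le n+1$, whence $d_r=0$ for $r>\nu=[(\dim L+1)/N_L]$.

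The substantive work, which I would invoke from \cite{Damian} rather than reproduce, is analytic rather than formal: achieving transversality for all moduli spaces of strips underlying the $\partial_k^{\bar{L}}$, controlling disk bubbling via monotonicity and $N_L\ge 3$ (no nonconstant disks of Maslov index $\le 2$) so that $(\partial^{\bar{L}})^2=0$ (this is \cite[Proposition 2.6]{Damian}), and justifying the fiberwise degeneration analysis behind the identification of $\partial_0^{\bar{L}}$ with the Morse differential of $\bar{f}$ together with the finiteness of the lift counts $n(p_i,q_j)$. I expect this last point---that the $E_1$-identification with $H_\ast(\bar{L};\mathbb{Z}_2)$ and the counts $n(p_i,q_j)$ behave correctly on a cover that may be infinite---to be the main obstacle; granting it, the statement reduces to the spectral sequence of a bounded filtered complex.
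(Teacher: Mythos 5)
The paper does not actually prove this theorem: it is quoted directly from Damian \cite[Theorem 2.9]{Damian} (an adaptation of Biran's construction \cite[Theorem 5.2.A]{Biran06}), so there is no in-paper proof to compare against. Your sketch is a faithful and correct outline of the cited argument --- the decomposition $\partial^{\bar{L}}=\sum_{k\ge 0}\partial_k^{\bar{L}}$ by total Maslov index for a $C^2$-small Morse perturbation, the resulting filtration by $T$-powers, the Floer--Po\'zniak identification of $\partial_0^{\bar{L}}$ with the Morse differential of $f\circ\pi$ on $\bar{L}$, and the degree count showing $d_r=0$ once $rN_L>\dim L+1$ --- and you correctly isolate the genuinely hard content (transversality, exclusion of bubbling via monotonicity and $N_L\ge 3$, finiteness and well-definedness of the lifted counts $n(p_i,q_j)$) as material to be imported from \cite{Damian} rather than rederived.
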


Back to the Gauss image, 
since $\nu=[\frac{\dim L +1}{N_L}]=[\frac{(n+1)g}{2n}]$,
one can easily get

\begin{corollary} 
For a Gauss image $L^{n}=\mathcal{G}(N^n)\subset Q_n(\mathbb{C})$, $g\geq 3$ 
and any $p, q\in \mathbb{Z}$, we have
\begin{enumerate}
\item $E^{p,q}_2=E^{p,q}_{\infty}$ if and only if $g=3$ and $(m_1,m_2)=(2,2), (4,4), (8,8)$.
\item  $E^{p,q}_3=E^{p,q}_{\infty}$ if and only if $g=3, (m_1,m_2)=(1,1)$ or  $g=4$.
\item $E^{p,q}_4=E^{p,q}_{\infty}$ if and only if $g=6, (m_1,m_2)=(1,1)$ or $(2,2)$.
\end{enumerate}
\end{corollary}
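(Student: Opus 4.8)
The plan is to derive everything from part (4) of Theorem \ref{Th:Damian}, which says that the spectral sequence collapses at the $(\nu+1)$-step, where $\nu=[\frac{\dim L+1}{N_L}]=[\frac{(n+1)g}{2n}]$, together with the explicit value $N_L=2n/g$ from Proposition \ref{MinMaslovGaussImage}. The point is that ``$E^{p,q}_r=E^{p,q}_\infty$ for all $p,q$'' holds precisely when $d_r=d_{r+1}=\dots=0$ automatically, and the cleanest sufficient condition for this that we actually control is $r\geq \nu+1$, i.e.\ $\nu\leq r-1$. So for each of the three cases $r=2,3,4$ I would simply compute $\nu$ as a function of $(g,n)$ and read off for which isoparametric data $(g,m_1,m_2)$ (recall $n=gm/2$ when $m_1=m_2=m$, and more generally $2n/g=m_1+m_2$ for $g$ even, $2m_1$ for $g$ odd) one has $\nu\leq 1$, $\nu\leq 2$, $\nu\leq 3$ respectively. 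Since by Proposition \ref{MinMaslovGaussImage} we are in the range $g\geq 3$, the constraint $\nu=[\frac{(n+1)g}{2n}]=[\frac{g}{2}+\frac{g}{2n}]$ is highly restrictive: for instance $g=3$ gives $\nu=[\frac{3}{2}+\frac{3}{2n}]$, which is $1$ once $n\geq 3$ except for small $n$ where the fractional part pushes it to $2$.

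First I would tabulate. For $g=3$: $n\in\{3,6,12,24\}$ with $m_1=m_2=1,2,4,8$; then $\frac{(n+1)g}{2n}=\frac32(1+\frac1n)$, which equals $2$ for $n=3$ (value $2$) and lies strictly between $1$ and $2$ for $n=6,12,24$, so $\nu=1$ there and $\nu=2$ for $n=3$. This gives $E_2=E_\infty$ exactly for $(m_1,m_2)=(2,2),(4,4),(8,8)$ and $E_3=E_\infty$ additionally picks up $(m_1,m_2)=(1,1)$. For $g=4$: $\frac{(n+1)\cdot4}{2n}=2(1+\frac1n)=2+\frac2n$, so $\nu=2$ for every admissible $n\geq 3$, hence $E_3=E_\infty$ always when $g=4$, and never $E_2=E_\infty$ (since $\nu=2>1$). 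For $g=6$: $\frac{(n+1)\cdot6}{2n}=3+\frac3n$; for $n=6$ ($m_1=m_2=1$) this is $3.5$ so $\nu=3$, for $n=12$ ($m_1=m_2=2$) this is $3.25$ so $\nu=3$, and for larger $n$ it is still $3$ until... one must check whether $3+\frac3n$ can be $\leq 3$, which is impossible, so $\nu=3$ whenever $g=6$ and $N_L\geq 3$; this yields $E_4=E_\infty$ exactly for $g=6$, and indeed the listed cases $(1,1),(2,2)$ are the only $g=6$ Gauss images satisfying $N_L\geq 3$ after one also checks there is no larger-multiplicity $g=6$ example (M\"unzner: $g=6$ forces $m_1=m_2\in\{1,2\}$).

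The remaining work is to verify the ``only if'' direction, i.e.\ that the collapse cannot happen at an \emph{earlier} step than $\nu+1$ for the excluded cases — equivalently, that the stated list is not just sufficient but sharp. Here I would argue that when $\nu\geq r$, the differential $d_r$ is in general nonzero on the $E_r$-page for these Lagrangians: this follows because $H_*(\bar L;\mathbb{Z}_2)$ (for $\bar L=L$ or its orientation double cover, as in Section \ref{Sec: Damian}) is nontrivial in the relevant degrees, and the argument that will be used in Sections \ref{Sec: g=3}--\ref{Sec: g=6} to prove Hamiltonian non-displaceability precisely shows $E_\infty\neq 0$, so the page on which it stabilizes is forced. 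Concretely, the statement ``$E^{p,q}_r=E^{p,q}_\infty$'' being an ``if and only if'' should be read as identifying the \emph{minimal} such $r$ via the collapse step $\nu+1$ from Theorem \ref{Th:Damian}(4), so the proof reduces to the arithmetic of $\nu$ above; the only genuinely content-bearing input beyond that arithmetic is M\"unzner's classification constraints on $(g,m_1,m_2)$ and the value of $N_L$ from \eqref{minimalMaslov}. The main obstacle, such as it is, is purely bookkeeping: one must carefully list \emph{all} admissible quadruples $(g,n,m_1,m_2)$ with $g\geq 3$ and compute $\nu$ for each, being careful with the floor function at the small values of $n$ (especially $n=3$ for $g=3$ and $n=6$ for $g=6$) where the fractional correction $\frac{g}{2n}$ changes the answer.
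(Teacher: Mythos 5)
Your proposal matches the paper's (implicit) proof exactly: the corollary is read off from part (4) of Theorem \ref{Th:Damian} by computing $\nu=\left[\frac{g}{2}+\frac{g}{2n}\right]$ for each admissible $(g,n)$ with $g\geq 3$, and your arithmetic ($\nu=1$ for $g=3$, $n=6,12,24$; $\nu=2$ for $g=3$, $n=3$ and for all $g=4$ cases; $\nu=3$ for $g=6$, $n=6,12$) is correct, as is your reading of the ``if and only if'' as identifying the guaranteed collapse step $\nu+1$ rather than the actual vanishing of differentials. The only blemishes are cosmetic: the restriction $m_1=m_2\in\{1,2\}$ for $g=6$ is Abresch's theorem rather than M\"unzner's, and the $(1,1)$ cases have $N_L=2$, not $N_L\geq 3$; neither affects the argument.
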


\section{Topology of Gauss images of isoparametric hypersurfaces %
 with $g=3$}
\label{Sec: g=3}

An isoparametric hypersurface with $g=3$,  
the so called Cartan hypersurface, is given by a tube around the standard embedding 
of the projective plane $\mathbb{F}P^{2}$ in $S^{3d+1}$, 
where $\mathbb{F}$ is the division algebra $\mathbb{R}$, $\mathbb{C}$, $\mathbb{H}$ or $\mathbb{O}$, 
and $d=1,2,4$ or $8$ \cite{Cartan}.  
The multiplicity satisfies $m=m_1=m_2=d$, and $n=3m$. 
In this section, we first observe the topology of  Gauss images of isoparametric hypersurfaces with 
$g=3$, then apply Biran and Cornea's general fact 
to obtain their wideness except for the case $n=3$. 

\begin{lemma}\label{3dim Z_2 sphere}
The Gauss image $L^3$ of $g=3$ is a $\mathbb{Z}_2$-homology sphere.
\end{lemma}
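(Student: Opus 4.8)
The plan is to identify the Gauss image $L^3$ for $g=3$ explicitly and then compute its $\mathbb{Z}_2$-homology directly. When $g=3$ and $m_1=m_2=1$, the isoparametric hypersurface is the Cartan hypersurface $N^3 = SO(3)/(\mathbb{Z}_2+\mathbb{Z}_2)$, the tube around the standard (Veronese) embedding of $\mathbb{R}P^2$ in $S^4$; it is diffeomorphic to the flag manifold $SO(3)/(\mathbb{Z}_2+\mathbb{Z}_2) = SU(2)/Q_8$ (the quotient of $S^3$ by the quaternion group of order $8$). By Proposition 2.2 (with $g=3$ odd), $N_L = 2m_1 = 2$ is even, so $L^3 = \mathcal{G}(N^3)$ is orientable, and $L^3 \cong N^3/\mathbb{Z}_3$ via the Deck-transformation covering. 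Thus $L^3$ is an orientable closed $3$-manifold with $\pi_1(L^3)$ an extension of $\mathbb{Z}_3$ by $\pi_1(N^3) = Q_8$; one recognizes this as the spherical space form $S^3/(Q_8 \rtimes \mathbb{Z}_3) = S^3/\widetilde{T}$, where $\widetilde{T}$ is the binary tetrahedral group of order $24$.

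First I would pin down the fundamental group: $\pi_1(L^3)$ has order $24$ and its abelianization is $\mathbb{Z}_3$ (the binary tetrahedral group has $[\widetilde{T},\widetilde{T}] = Q_8$, abelianization $\mathbb{Z}_3$). Then $H_1(L^3;\mathbb{Z}) = \mathbb{Z}_3$, which has no $2$-torsion, so $H_1(L^3;\mathbb{Z}_2) = 0$. Since $L^3$ is a closed orientable $3$-manifold, Poincaré duality gives $H_2(L^3;\mathbb{Z}_2) \cong H^1(L^3;\mathbb{Z}_2) \cong \mathrm{Hom}(H_1(L^3;\mathbb{Z}),\mathbb{Z}_2) = 0$ as well, while $H_0 = H_3 = \mathbb{Z}_2$. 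Hence $L^3$ is a $\mathbb{Z}_2$-homology $3$-sphere. For the general $g=3$ case with $m = d \in \{1,2,4,8\}$ and $n = 3m$, the relevant $L$ is the quotient of the Cartan hypersurface $N^n$ (a sphere bundle over $\mathbb{F}P^2$, homeomorphic to an $\mathbb{F}$-analogue flag manifold) by $\mathbb{Z}_3$; but the lemma as stated concerns only $L^3$, so the $m=1$ analysis suffices here — the higher-$d$ topology is treated in the subsequent results leading to Corollary 4.x.

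The main obstacle is the explicit identification $L^3 \cong S^3/\widetilde{T}$ and, in particular, the verification that the $\mathbb{Z}_3$ Deck action on $N^3 = S^3/Q_8$ lifts to the binary-tetrahedral extension rather than to $\mathbb{Z}_3 \times Q_8$ (which would give a nontrivial $H_1(\,\cdot\,;\mathbb{Z}_2)$ if $Q_8$ contributed — though in fact even $\mathbb{Z}_3 \times Q_8$ has $2$-torsion $\mathbb{Z}_2\oplus\mathbb{Z}_2$ in abelianization, so this distinction genuinely matters). I would settle this by recalling the known homogeneous description: the Gauss image of the Cartan hypersurface is $SU(3)/T^2$'s real locus, more precisely $L^3 = \{x \in Q_3(\mathbb{C})\}$ cut out as the image under $\mathcal{G}$, and by Cartan's classification together with the computation in \cite{Ma-OhnitaJDG} one has $L = SO(3)/((\mathbb{Z}_2+\mathbb{Z}_2)\mathbb{Z}_3)$; this quotient group of order $24$ acting freely on $SU(2) = S^3$ (after passing to the double cover) is exactly $\widetilde{T}$. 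Alternatively, and perhaps more cleanly, one can bypass the group identification entirely: it suffices to show $H_1(L^3;\mathbb{Z})$ has odd order. Since $\chi(L^3) = 0$ and $b_1(L^3;\mathbb{Q}) = b_2(L^3;\mathbb{Q})$, and since $N^3 \to L^3$ is a $3$-fold cover with $N^3$ having finite $\pi_1$, the transfer argument gives $H_*(L^3;\mathbb{Q}) = H_*(N^3;\mathbb{Q})^{\mathbb{Z}_3} = H_*(S^3;\mathbb{Q})$, so $b_1(L^3) = 0$ and $H_1(L^3;\mathbb{Z})$ is finite; then I would compute its order via the Euler characteristic / Reidemeister-torsion or simply via the presentation of $\pi_1$, and check $2 \nmid |H_1|$.
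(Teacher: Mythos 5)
Your proposal is correct and follows essentially the same route as the paper: both arguments realize $L^3$ as the quotient of $SU(2)\cong S^3$ by an order-$24$ subgroup (the paper's $\tilde{K}_{[\mathfrak{a}]}$, which is precisely your binary tetrahedral group), deduce $H_1(L^3;\mathbb{Z})\cong\mathbb{Z}_3$ from the abelianization of that group, and conclude via orientability, Poincar\'e duality and the universal coefficient theorem. The only difference is bookkeeping: the paper verifies $[\tilde{K}_{[\mathfrak{a}]},\tilde{K}_{[\mathfrak{a}]}]=\tilde{K}_0$ by a direct computation with the explicit matrices $\pm A_0,\pm A_0^2$, whereas you identify the covering group abstractly as the binary tetrahedral group and quote its known abelianization --- the identification you flag as the ``main obstacle'' is exactly what that explicit matrix computation settles.
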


\begin{proof}
Recall that all homogeneous isoparametric hypersurfaces $N^n \subset S^{n+1}(1)$ can be obtained as principal orbits of compact Riemannian symmetric pairs $(U, K)$ of 
rank $2$, and that 
isoparametric hypersurfaces with $g=3$ must be homogenous.
For the case when $(g,m,n)=(3,1,3)$,  
in \cite[p.780]{Ma-Ohnita1}
we have a homogenous space expression  
$N=K/K_0 \cong \frac{SO(3)}{\mathbb{Z}_2\oplus \mathbb{Z}_2}\cong SU(2)/\tilde{K}_0$, 
and 
$L=K/K_{[\mathfrak a]}\cong SU(2)/\tilde{K}_{[\mathfrak{a}]}$, 
where 
$\tilde{K}_{[\mathfrak{a}]}$ is generated by $\tilde{K}_0$ and 
\[
\pm 
\left(
\begin{array}{cc}
\frac{1+\sqrt{-1}}{2} & \frac{1+\sqrt{-1}}{2} \\
\frac{-1+\sqrt{-1}}{2} & \frac{1-\sqrt{-1}}{2}\\
\end{array}
\right)
=:\pm A_0,
\quad 
\pm 
\left(
\begin{array}{cc}
\frac{-1+\sqrt{-1}}{2}&\frac{1+\sqrt{-1}}{2} \\
\frac{-1+\sqrt{-1}}{2}&\frac{-1-\sqrt{-1}}{2}\\
\end{array}
\right)
=: \pm A_0^2,
\]
with 
$A_0^3=-\left(
\begin{array}{cc}
1&0 \\
0&1\\
\end{array}
\right)$.
By a direct computation, we obtain $[\tilde{K}_{[\mathfrak{a}]},\tilde{K}_{[\mathfrak{a}]}]=\tilde{K}_{0}$.
Hence
\[
H_1(L^3; \mathbb{Z})\cong \pi_1(L^3)/[\pi_1(L^3), \pi_1(L^3)]\cong \tilde{K}_{[\mathfrak{a}]}/[\tilde{K}_{[\mathfrak{a}]}, \tilde{K}_{[\mathfrak{a}]}]\cong \tilde{K}_{[\mathfrak{a}]}/\tilde{K}_{0}
 \cong \mathbb{Z}_3.
\]
Since $L^{3}$ is orientable, by the Poincar\'{e} duality 
and universal coefficient theorem, 
we obtain
\[
H_k(L^3; \mathbb{Z})\cong 
\left\{
\begin{array}{ll}
\mathbb{Z} & \hbox{ if } k=0, 3, \\
\mathbb{Z}_3 & \hbox{ if } k=1, \\
0 & \hbox{ if } k=2.
\end{array}
\right.
\]
Therefore, by the universal coefficients theorem, 
$L^3$ is a $\mathbb{Z}_2$-homology sphere. 
\end{proof}

\begin{lemma} \label{Z_2 sphere}
The Gauss image $L^n=\mathcal{G}(N^n)$ of a Cartan isoparametric hypersurface 
$N^n\subset S^{n+1}(1)$ \rm{(}i.e., $g=3$ and $n=3, 6, 12, 24$\rm{)} is a $\mathbb{Z}_2$-homology sphere. 
\end{lemma}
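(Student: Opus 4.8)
The plan is to reduce the general Cartan case $n=3m$ with $m=2,4,8$ to a cohomological computation analogous to Lemma~\ref{3dim Z_2 sphere}, using the known homogeneous structure $N^n=\mathbb{F}P^2$-tube and the covering $\mathcal{G}:N^n\to L^n=N^n/\mathbb{Z}_3$. First I would record the (known) rational/integral cohomology of the Cartan hypersurface $N^n$ itself: since $N^n$ is a sphere bundle (the unit normal tube) over $\mathbb{F}P^2$, or alternatively since its cohomology is determined by M\"unzner's description of the two focal submanifolds, one has that $N^n$ has the $\mathbb{Z}_2$-cohomology of a product-like space whose Betti numbers with $\mathbb{Z}_2$-coefficients are concentrated so that $H^*(N^n;\mathbb{Z}_2)$ is that of $S^m\times S^{2m}$ in the relevant range, or can be computed directly. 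The cleanest route is M\"unzner's theorem: for $g=3$ the cohomology ring $H^*(N^n;\mathbb{Z}_2)$ is known explicitly and $N^n$ is a $\mathbb{Z}_2$-cohomology $S^m\times S^{2m}$ only up to the action; what I actually need is the $\mathbb{Z}_2$-cohomology of the quotient $L^n$.

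The key step is to run the Cartan--Leray (Serre) spectral sequence, or the transfer argument, for the $\mathbb{Z}_3$-covering $N^n\to L^n$. Over $\mathbb{Z}_2$ the group $\mathbb{Z}_3$ has trivial (co)homology, so the transfer map shows $H^*(L^n;\mathbb{Z}_2)\cong H^*(N^n;\mathbb{Z}_2)^{\mathbb{Z}_3}$, the $\mathbb{Z}_3$-invariants of the $\mathbb{Z}_2$-cohomology of $N^n$. Thus I would (i) identify $H^*(N^n;\mathbb{Z}_2)$ explicitly for $m=2,4,8$ from M\"unzner's formulas, (ii) determine the induced $\mathbb{Z}_3$-action (coming from the deck transformation, equivalently the order-$3$ element of the isotropy as in the $n=3$ case, realized inside $SU(3)$, $Sp(3)$ or $F_4$ acting on $N^n=\mathbb{F}P^2$-tube), and (iii) take invariants. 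I expect the action to be nontrivial precisely on the ``middle'' classes, exactly as $\mathbb{Z}_3$ acted nontrivially on $H_1$ in Lemma~\ref{3dim Z_2 sphere}, so that all intermediate $\mathbb{Z}_2$-cohomology of $N^n$ gets killed upon passing to invariants and only $H^0$ and $H^n$ survive; together with connectedness and Poincar\'e duality (note $L^n$ is orientable when $2n/g$ is even, which for $g=3$, $n=3m$ means $2m$ even, always true for $m=2,4,8$) this yields that $L^n$ is a $\mathbb{Z}_2$-homology sphere.

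The main obstacle will be step (ii): pinning down the $\mathbb{Z}_3$-action on $H^*(N^n;\mathbb{Z}_2)$ for $m=4$ and $m=8$, where an explicit matrix computation as in the $m=1$ case is not convenient. I would instead argue it indirectly: the deck group $\mathbb{Z}_3$ acts freely on $N^n$, so if it acted trivially on $H^*(N^n;\mathbb{Z}_2)$ then Euler-characteristic/Lefschetz constraints (a free $\mathbb{Z}_3$-action forces $\chi(N^n)\equiv 0\ (\mathrm{mod}\ 3)$, while $\chi(N^n)=4$ from its known cohomology) give a contradiction, or one invokes Smith theory: a free $\mathbb{Z}_p$-action on a $\mathbb{Z}_2$-cohomology $S^m\times S^{2m}$ cannot be trivial on cohomology when $p=3$. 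This forces the nontrivial permutation of the middle-dimensional classes, and then the invariants computation is immediate. Finally I would assemble these pieces with the universal coefficient theorem to conclude $H_*(L^n;\mathbb{Z}_2)\cong H_*(S^n;\mathbb{Z}_2)$, i.e. $L^n$ is a $\mathbb{Z}_2$-homology sphere, completing the proof.
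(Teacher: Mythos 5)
Your overall framework is the same as the paper's (use the $\mathbb{Z}_3$-covering $\mathcal{G}\colon N\to L$ and the transfer isomorphism $H_*(L;\mathbb{Z}_2)\cong H_*(N;\mathbb{Z}_2)^{\mathbb{Z}_3}$, then kill the middle-dimensional invariants), but the execution has a concrete error in the input data that breaks your key step. By M\"unzner, the $\mathbb{Z}_2$-homology of a $g=3$ hypersurface is \emph{not} that of $S^m\times S^{2m}$: one has $H_k(N;\mathbb{Z}_2)\cong\mathbb{Z}_2$ for $k=0,n$, $\cong\mathbb{Z}_2\oplus\mathbb{Z}_2$ for $k=m,2m$, and $0$ otherwise, so the total $\mathbb{Z}_2$-Betti number is $2g=6$ and, for $m=2,4,8$, $\chi(N)=6$, not $4$. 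With the correct value $\chi(N)=6\equiv 0\pmod 3$, your proposed ``indirect'' argument for step (ii) produces no contradiction at all; and as you stated it (with $\chi=4$), the contradiction would be with the mere existence of the free deck action---which certainly exists---so the argument cannot be repaired by wording: the model $S^m\times S^{2m}$ is simply wrong (indeed no space with $\chi=4$ admits any free $\mathbb{Z}_3$-action, which is a sanity check that your Betti numbers are off). The appeal to ``Smith theory'' for nontriviality of the action on a $\mathbb{Z}_2$-cohomology $S^m\times S^{2m}$ is likewise moot, and your primary route (computing the deck action explicitly inside $SU(3)$, $Sp(3)$, $F_4$) is left unexecuted, so as written the middle invariants are never shown to vanish.

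The fix is short and is exactly what the paper does, and it bypasses your step (ii) entirely: since $\mathcal{G}$ is a $3$-fold covering, $\chi(L)=\chi(N)/3=2$; the transfer already gives $H_k(L;\mathbb{Z}_2)=0$ for $k\neq 0,m,2m,n$, and for $m\in\{2,4,8\}$ all degrees are even, so
\[
2=\chi(L)=1+\dim_{\mathbb{Z}_2}H_m(L;\mathbb{Z}_2)+\dim_{\mathbb{Z}_2}H_{2m}(L;\mathbb{Z}_2)+1,
\]
forcing both middle groups to vanish; the case $n=3$ is then covered separately by the explicit computation of Lemma \ref{3dim Z_2 sphere} (there the Euler characteristic argument is unavailable since $n$ is odd, which is why the paper treats it by hand via $H_1(L;\mathbb{Z})\cong\mathbb{Z}_3$). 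So you do not need to determine the $\mathbb{Z}_3$-action on $H_*(N;\mathbb{Z}_2)$ at all; the Euler characteristic of the quotient does the work once the correct M\"unzner Betti numbers are used.
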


\begin{proof}
 The $\mathbb{Z}_2$-homology of $N^n$ is known  \cite{Muenzner1} as
\[
H_k(N; \mathbb{Z}_2)\cong \left\{
                        \begin{array}{ll}
                          \mathbb{Z}_2, & \hbox{ for } k=0, n,\\
                        \mathbb{Z}_2\oplus \mathbb{Z}_2, &  \hbox{ for } k=m, 2m,\\
                        0, & \hbox{ otherwise, }
                        \end{array}
                      \right.
\]
where $n=3m$ and 
$m\in \{1,2,4,8\}$. 
Since $\mathcal{G}: N^n\rightarrow L^n$ is a covering map and $\mathbb{Z}_3$ acts on $N^n$ as a Deck transformation, 
Theorem 2.4 in \cite[p.120]{Bredon} implies 
\[
H_k(L^n ;\mathbb{Z}_2)=H_k(N/\mathbb{Z}_3;\mathbb{Z}_2)\cong H_k(N;\mathbb{Z}_2)^{\mathbb{Z}_3}=0,
\]
for $k\neq 0, m, 2m, 3m(=n)$.

On the other hand, 
when $m\in \{2,4,8\}$, $n$ is even and
$\chi(L)=\chi(N^n)/3=6/3=2$
holds. 
Putting 
$l:=\dim_{\mathbb{Z}_2} H_m(L;\mathbb{Z}_2)$, 
we obtain $2=\chi(L)=1+l+l+1=2+2l$, 
which yields  $l=0$. 
Hence $L$ is a $\mathbb{Z}_2$-homology sphere.

Combing with Lemma \ref{3dim Z_2 sphere}, we complete the proof.
\end{proof}

Let us give several applications of Lemma \ref{Z_2 sphere}. 
Firstly we recall Biran and Cornea's result 
\cite{Biran-Cornea} 
stated in \cite[Proposition 1.6 (3)]{Biran-Khanevsky13}:
\begin{proposition}
\label{Prop:Biran-Cornea} 
Let $L$ be a closed monotone Lagrangian submanifold embedded 
in a closed symplectic manifold $(M,\omega)$.
If $H_i(L;\mathbb{Z}_{2})=0$ for any $i\in \mathbb{Z}$ with $i\equiv -1\, \mathrm{mod}\ N_L$, 
then $HF(L)\cong H_*(L;\mathbb{Z}_2)\otimes \Lambda$.
\end{proposition}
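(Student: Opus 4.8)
The plan is to derive Proposition~\ref{Prop:Biran-Cornea} from the Floer spectral sequence. Concretely, one applies Damian's spectral sequence (Theorem~\ref{Th:Damian}) to the \emph{trivial} covering $\bar L = L$, so that $HF^{\bar L}(L) = HF(L)$; equivalently one invokes the original Biran--Cornea spectral sequence \cite[Theorem~5.2.A]{Biran06}, \cite{Biran-Cornea}. Its first page is $E^{p,q}_1 = H_{p+q-pN_L}(L;\mathbb{Z}_2)\otimes\Lambda^{pN_L}$, it converges to $HF(L)$, and it collapses after finitely many steps. Hence the desired isomorphism $HF(L)\cong H_*(L;\mathbb{Z}_2)\otimes\Lambda$ is equivalent to the collapse of the spectral sequence already at $E_1$, i.e.\ to the vanishing of every differential $d_r$ with $r\geq 1$.

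So everything reduces to a vanishing statement for the $d_r$, and here the decisive input is a degree count. By Theorem~\ref{Th:Damian} the differential $d_r$ raises the homological grading (the index of the $H_*(L;\mathbb{Z}_2)$-factor) by exactly $rN_L - 1$; since $rN_L - 1 \equiv -1 \pmod{N_L}$, a non-trivial component of $d_r$ would require two homological degrees $k$ and $k + rN_L - 1$, both lying in $[0,\dim L]$, with $H_k(L;\mathbb{Z}_2)\neq 0$ and $H_{k+rN_L-1}(L;\mathbb{Z}_2)\neq 0$. By hypothesis the whole block of homological degrees $\equiv -1 \pmod{N_L}$ vanishes; and since $L$ is a closed manifold, Poincar\'e duality with $\mathbb{Z}_2$-coefficients shows that the block of degrees $\equiv \dim L + 1 \pmod{N_L}$ vanishes as well. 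Feeding these two vanishing blocks, together with the range restriction $k + rN_L - 1 \leq \dim L$, back into the degree count rules out all differentials adjacent to the top and bottom of $H_*(L;\mathbb{Z}_2)$; in particular the point class in $H_0(L;\mathbb{Z}_2)$ and the fundamental class in $H_{\dim L}(L;\mathbb{Z}_2)$ survive to $E_\infty$, so at the very least $HF(L)\neq 0$.

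The step I expect to be the genuine obstacle is ruling out the remaining \emph{interior} differentials $d_r$ joining two homological degrees neither of which is extreme: for these the plain modulo-$N_L$ bookkeeping is not conclusive by itself. To close the argument in full generality one has to use more of the structure than Theorem~\ref{Th:Damian} records, namely that on the first page $d_1$ is a derivation for the (classical) intersection product on $H_*(L;\mathbb{Z}_2)\otimes\Lambda$, as established by Biran and Cornea; combined with Poincar\'e duality this forces $d_1$, and then inductively all higher $d_r$, to vanish under the stated hypothesis. In the only situation actually needed in this paper, however, $L$ will be a $\mathbb{Z}_2$-homology sphere, and then the degree count already suffices on its own: $H_*(L;\mathbb{Z}_2)$ is concentrated in degrees $0$ and $\dim L$, so each $d_r$ either starts in degree $0$, in which case its target degree $rN_L-1\equiv -1\pmod{N_L}$ lies in the block killed by hypothesis, or starts in degree $\dim L$, in which case its target degree $\dim L + rN_L - 1$ exceeds $\dim L$; either way $d_r = 0$.
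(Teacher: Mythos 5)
The paper does not prove this proposition at all: it is imported verbatim from Biran--Cornea, in the form recorded as Proposition 1.6\,(3) of \cite{Biran-Khanevsky13}, so there is no internal argument to compare yours with. Your reduction to the vanishing of all differentials $d_r$ in the Oh/Biran spectral sequence is the right framework, and your degree bookkeeping is correct as far as it goes: since $d_r$ shifts the homological grading by $rN_L-1\equiv -1 \pmod{N_L}$, the hypothesis kills every differential emanating from a degree $\equiv 0 \pmod{N_L}$, and Poincar\'e duality (which converts the hypothesis into $H_i(L;\mathbb{Z}_2)=0$ for $i\equiv \dim L+1 \pmod{N_L}$) kills every differential landing in such a degree; in particular the classes in $H_0$ and $H_{\dim L}$ survive and $HF(L)\neq 0$. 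But, as you yourself flag, this does not touch a differential $H_k\to H_{k+rN_L-1}$ with $k\not\equiv 0$ and $k+rN_L-1\not\equiv \dim L+1 \pmod{N_L}$: for instance with $N_L=3$ and $\dim L=4$ the hypothesis only imposes $H_2=0$, and a potential $d_1:H_1\to H_3$ is untouched, so the degree count provably cannot yield the full statement. The patch you gesture at --- $d_1$ being a derivation for the intersection product, ``combined with Poincar\'e duality'' --- is not carried out and does not obviously close the gap: applying the Leibniz rule to $a\cdot \bar a=[\mathrm{pt}]$ only produces identities in degrees $\equiv -1\pmod{N_L}$, where everything already vanishes by hypothesis, so it extracts no constraint on $d_1(a)$ itself. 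Ruling out these interior differentials is exactly the content of Biran--Cornea's argument, which uses more of the quantum (pearl-complex) structure than Theorem \ref{Th:Damian} records; as a proof of Proposition \ref{Prop:Biran-Cornea} as stated, your proposal therefore has a genuine gap, and the honest course is to cite \cite{Biran-Cornea}, \cite{Biran-Khanevsky13} as the paper does.

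That said, your closing observation is correct and is all the paper actually needs: by Lemma \ref{Z_2 sphere} the relevant $L$ is a $\mathbb{Z}_2$-homology sphere with $\dim L=3m$ and $N_L=2m$, $m\in\{2,4,8\}$, so $H_*(L;\mathbb{Z}_2)$ is concentrated in degrees $0$ and $\dim L$; your degree count then kills every $d_r$ outright (targets from degree $0$ lie in the vanishing block $\equiv -1\pmod{N_L}$, targets from degree $\dim L$ overshoot $\dim L$), which gives Corollary \ref{cor:g=3} without invoking the general proposition.
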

By Proposition \ref{MinMaslovGaussImage}, Lemma \ref{Z_2 sphere} and Proposition \ref{Prop:Biran-Cornea}, 
we obtain 
\begin{corollary}\label{cor:g=3}
Let $L^n=\mathcal{G}(N^n)\subset Q_n(\mathbb{C})$  be the Gauss image of 
an isoparametric hypersurface $N$ with $g=3$ and $n=6,12,24$.
Then $L$ is wide.
Hence, 
$\#(L\cap\varphi(L))\geq 2$ for any $\varphi \in \mathrm{Ham}(Q_n(\mathbb{C}), \omega_{\mathrm{std}})$ 
such that $L$ intersects with $\varphi(L)$ transversally.
\end{corollary}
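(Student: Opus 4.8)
The plan is to assemble three ingredients that are already in place: the minimal Maslov number formula \eqref{minimalMaslov}, the topological computation of Lemma~\ref{Z_2 sphere}, and Biran--Cornea's wideness criterion (Proposition~\ref{Prop:Biran-Cornea}); the intersection estimate will then follow from the Hamiltonian invariance of Floer homology together with the definition of $CF(L)$. First I would record that, by \eqref{minimalMaslov} with $g=3$ odd, $m_1=m_2=m$, $n=3m$ and $m\in\{2,4,8\}$, the minimal Maslov number is $N_L=2m\in\{4,8,16\}$. Next, Lemma~\ref{Z_2 sphere} gives that $L^n$ is a $\mathbb{Z}_2$-homology sphere, so $H_i(L;\mathbb{Z}_2)\cong\mathbb{Z}_2$ for $i\in\{0,n\}$ and $H_i(L;\mathbb{Z}_2)=0$ otherwise.

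The only point requiring an actual (and very small) argument is checking the hypothesis of Proposition~\ref{Prop:Biran-Cornea}, namely that $H_i(L;\mathbb{Z}_2)=0$ whenever $i\equiv -1\pmod{N_L}$. The only degrees carrying $\mathbb{Z}_2$-homology are $i=0$ and $i=n=3m$. Since $N_L=2m\ge 4>1$ we have $0\not\equiv -1\pmod{N_L}$. For $i=n$ note that $3m\equiv m\pmod{2m}$, and $m\equiv -1\pmod{2m}$ would force $2m\mid m+1$, which is impossible for $m\ge 2$ because $0<m+1<2m$. Hence the criterion applies and $HF(L)\cong H_*(L;\mathbb{Z}_2)\otimes\Lambda$, i.e.\ $L$ is wide. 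This congruence check is exactly where the hypothesis $n\ge 6$ enters: for $m=1$ one has $n=3\equiv -1\pmod 2=N_L$, the criterion fails, and the case $n=3$ must be excluded; thus whatever difficulty the corollary contains sits entirely in Lemmas~\ref{3dim Z_2 sphere} and~\ref{Z_2 sphere}, not in this deduction.

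Finally, for the intersection bound, I would take $\varphi\in\mathrm{Ham}(Q_n(\mathbb{C}),\omega_{\mathrm{std}})$ with $L\pitchfork\varphi(L)$. Then $CF(L)$ is the $\mathbb{Z}_2$-vector space freely generated by the finite set $L\cap\varphi(L)$, so $\#(L\cap\varphi(L))=\dim_{\mathbb{Z}_2}CF(L)\ge\dim_{\mathbb{Z}_2}HF(L)$; more generally the number of transverse intersection points is bounded below by the total $\mathbb{Z}_2$-rank of $HF(L)$. By wideness this rank equals $\dim_{\mathbb{Z}_2}H_*(L;\mathbb{Z}_2)$, which is $2$ for a $\mathbb{Z}_2$-homology $n$-sphere, so $\#(L\cap\varphi(L))\ge 2$. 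I do not expect any genuine obstacle here; the argument is a routine assembly of the cited results, and the main ``work'' has already been done in establishing that the $g=3$ Gauss images are $\mathbb{Z}_2$-homology spheres with the stated large Maslov numbers.
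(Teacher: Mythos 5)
Your proposal is correct and takes essentially the same route as the paper: the paper's proof of this corollary is exactly the combination of Proposition \ref{MinMaslovGaussImage}, Lemma \ref{Z_2 sphere} and Proposition \ref{Prop:Biran-Cornea}, with the congruence check left implicit. Your explicit verification that $0,\,3m\not\equiv -1\pmod{2m}$ for $m\in\{2,4,8\}$ (and its failure for $m=1$, explaining the exclusion of $n=3$), together with the standard transverse-intersection count $\#(L\cap\varphi(L))\ge \dim_{\mathbb{Z}_2}H_*(L;\mathbb{Z}_2)=2$, just fills in the routine steps the paper omits.
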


Secondly, recall Biran and Khanevsky's result:

\begin{proposition}[{\cite[Corollary 1.5]{Biran-Khanevsky13}}]
\label{pro:B-K1}
Let $L^n$ be a closed monotone Lagrangian submanifold 
embedded in $Q_n(\mathbb{C})$ 
with $N_L \geq 2$ and $n \geq 2$.
If $HF(L) \neq 0$, then $L \cap S^n \neq \emptyset$, 
where $S^n$ denotes the totally geodesic Lagrangian sphere of $Q_n(\mathbb{C})$.
\end{proposition}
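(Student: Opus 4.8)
This is Biran and Khanevsky's result, and I would prove it by importing Biran's symplectic decomposition of the quadric together with a Floer--Gysin argument. The first step is to fix a hyperplane section $\Sigma\cong Q_{n-1}(\mathbb{C})\subset Q_n(\mathbb{C})$ and to recall that its complement, the smooth affine quadric, is symplectomorphic to $T^*S^n$ in such a way that the totally geodesic Lagrangian sphere $S^n\subset Q_n(\mathbb{C})$ corresponds to the zero section. Biran's decomposition then writes $Q_n(\mathbb{C})=E\cup\Delta$ with $E$ a standard symplectic disk bundle over $\Sigma$ and $\Delta$ the isotropic skeleton of the associated Liouville structure on $Q_n(\mathbb{C})\setminus\Sigma\cong T^*S^n$; since that skeleton is the zero section, $\Delta=S^n$. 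Hence $Q_n(\mathbb{C})\setminus S^n=E^{\circ}$ is the open disk bundle over $Q_{n-1}(\mathbb{C})$ and deformation retracts onto $\Sigma$. (This is the point at which the hypothesis $n\ge 2$ is used, so that $Q_{n-1}(\mathbb{C})$ is an honest hyperplane section.)

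Now assume $L\cap S^n=\emptyset$; then $L$ lies in $Q_n(\mathbb{C})\setminus S^n=E^{\circ}$. After a Hamiltonian perturbation I would place $L$ in good position with respect to the fibration $E^{\circ}\to\Sigma$: disjoint from the zero section $\Sigma$ and projecting to an immersed monotone Lagrangian $\bar L\subset\Sigma=Q_{n-1}(\mathbb{C})$, with $L$ Hamiltonian isotopic to the total space of the restriction to $\bar L$ of the circle bundle $\partial E\to\Sigma$. In this situation the Floer--Gysin exact sequence of Biran and Khanevsky applies and expresses $HF(L;Q_n(\mathbb{C}))$ through the Lagrangian Floer homology of $\bar L$ inside $\Sigma$, with connecting homomorphism given by cap product with the Euler class of the normal circle bundle of $\Sigma$ in $Q_n(\mathbb{C})$, i.e.\ by multiplication by a power of the Novikov variable recording the positive normal degree of $Q_{n-1}(\mathbb{C})$ in $Q_n(\mathbb{C})$. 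Because that normal degree is positive, the connecting map is an isomorphism, the long exact sequence collapses, and we obtain $HF(L;Q_n(\mathbb{C}))=0$, contradicting $HF(L)\neq 0$. Therefore $L\cap S^n\neq\emptyset$.

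As for the distribution of difficulty: identifying the skeleton of the Biran decomposition of $(Q_n(\mathbb{C}),Q_{n-1}(\mathbb{C}))$ with $S^n$ is routine once the affine-quadric picture is in place, and likewise for putting $L$ in good position with respect to the fibration. The heart of the matter is the Floer--Gysin sequence itself, whose construction requires comparing the moduli spaces of $J$-holomorphic disks in $Q_n(\mathbb{C})$ bounded by $L$ with those in $\Sigma$ bounded by $\bar L$ via their intersection numbers with $\Sigma$, and then checking that the monotonicity constants and Maslov indices make the Euler-class map surjective so that the sequence degenerates in the quadric. If one prefers to bypass this machinery, the alternative is to show directly that every closed Lagrangian in $Q_n(\mathbb{C})\setminus S^n$ is Hamiltonian displaceable, using a Hamiltonian circle action on $Q_n(\mathbb{C})$ whose fixed set contains $S^n$ and which rotates the disk fibers of $E^{\circ}$; but controlling such a displacement for a Lagrangian that may run close to the zero section $\Sigma$ is precisely the obstacle that the Floer--Gysin route is designed to overcome.
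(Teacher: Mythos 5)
The paper offers no proof of this proposition: it is imported verbatim as Corollary~1.5 of Biran--Khanevsky, so the only ``proof'' in the text is the citation. Your description of the ambient geometry is correct as far as it goes: for $n\ge 2$ the complement $Q_n(\mathbb{C})\setminus Q_{n-1}(\mathbb{C})$ is the affine quadric, its Liouville skeleton is the real form $S^n$, and hence $Q_n(\mathbb{C})\setminus S^n$ is the open standard disk bundle $E^{\circ}$ over $\Sigma=Q_{n-1}(\mathbb{C})$.

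The step where you ``place $L$ in good position,'' however, is a genuine gap and not a routine reduction. A closed Lagrangian contained in $E^{\circ}$ is in general not Hamiltonian isotopic to a Lagrangian circle bundle over an (immersed) Lagrangian of $\Sigma$: it need not project to anything Lagrangian in the base, and no isotopy will force it to (a small Lagrangian torus in a Darboux ball inside $E^{\circ}$ already violates your normal form; it just happens to have vanishing Floer homology). The Floer--Gysin sequence is constructed only for the special class of Lagrangian circle bundles $\Gamma_{\bar L}\subset\partial E$, so it cannot be applied to an arbitrary $L\subset E^{\circ}$; this is precisely why the corollary is not proved this way. A secondary problem: even for an honest circle bundle, the Gysin sequence kills $HF(\Gamma_{\bar L})$ only when the quantum Floer--Euler class acts invertibly on $HF(\bar L)$ inside $\Sigma$, which is a computation in the quantum homology of $Q_{n-1}(\mathbb{C})$ and not a formal consequence of positivity of the normal degree. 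The standard route to intersection statements of this kind for \emph{arbitrary} monotone $L$ is the quantum-module mechanism of Biran--Cornea: one shows that the open--closed (quantum inclusion) image of $HF(S^n,S^n)$ contains an invertible element $e$ of $QH(Q_n(\mathbb{C});\mathbb{Z}_2)$ (roughly, because the point class is invertible in the quantum homology of the quadric); then $e$ acts invertibly, hence nontrivially, on $HF(L,L)\neq 0$, while this action factors through $HF(L,S^n)$, which would vanish if $L\cap S^n=\emptyset$. If you want to present a proof rather than a citation, that is the argument to reconstruct.
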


Combining Corollary \ref{cor:g=3} with Proposition \ref{pro:B-K1}, we obtain

\begin{corollary}\label{cor: LSn}
Let $L^n=\mathcal{G}(N^n)\subset Q_n(\mathbb{C})$  be as in Corollary \ref{cor:g=3}. 
Then 
$\varphi(L) \cap S^n \neq \emptyset$
for any $\varphi \in \mathrm{Ham}(Q_n(\mathbb C), \omega_{\mathrm{std}})$.
\end{corollary}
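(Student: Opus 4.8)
The plan is to combine the wideness result for the Gauss images with $g=3$, $n=6,12,24$ (Corollary \ref{cor:g=3}) with the Floer-theoretic detection property of the Lagrangian sphere in the complex hyperquadric (Proposition \ref{pro:B-K1}). The statement to be proved is precisely this composition, so the proof should be quite short. First I would recall that by Corollary \ref{cor:g=3} the Gauss image $L^n = \mathcal{G}(N^n)$ with $g=3$ and $n \in \{6,12,24\}$ is wide, which by definition means $HF(L) \cong H_*(L;\mathbb{Z}_2) \otimes \Lambda$; since $H_0(L;\mathbb{Z}_2) \neq 0$, this forces $HF(L) \neq 0$. Next I would invoke the Hamiltonian invariance of Floer homology: for any $\varphi \in \mathrm{Ham}(Q_n(\mathbb{C}), \omega_{\mathrm{std}})$, the submanifold $\varphi(L)$ is a closed monotone embedded Lagrangian submanifold of $Q_n(\mathbb{C})$ (monotonicity and the minimal Maslov number $N_L = 2n/3 \geq 2$ are preserved under Hamiltonian isotopy, and $n \geq 2$ holds in all three cases), with $HF(\varphi(L)) \cong HF(L) \neq 0$. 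Then I would apply Proposition \ref{pro:B-K1} with $\varphi(L)$ in place of $L$: since $HF(\varphi(L)) \neq 0$, we conclude $\varphi(L) \cap S^n \neq \emptyset$, which is exactly the claim.

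The step I would flag as requiring a word of care — though it is not really an obstacle — is checking that the hypotheses of Proposition \ref{pro:B-K1} genuinely transfer to $\varphi(L)$: namely that $\varphi(L)$ is still embedded (clear, since $\varphi$ is a diffeomorphism), still monotone with the same minimal Maslov number $N_L = 2n/g = 2n/3$ (clear from the symplectic-topological invariance of $I_{\mu,L}$ and $I_\omega$ under Hamiltonian isotopy), and that $N_L \geq 2$ and $n \geq 2$ (here $N_L = 4, 8, 16$ for $n = 6, 12, 24$ respectively, so both bounds are comfortably satisfied). One could equally well apply Proposition \ref{pro:B-K1} to $L$ itself to get $L \cap S^n \neq \emptyset$, and then note that replacing $L$ by $\varphi(L)$ throughout the preceding corollaries yields the version with $\varphi(L)$; I prefer the former phrasing as it makes the logical dependence on Hamiltonian invariance explicit. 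No genuinely hard analytic or computational input is needed beyond what has already been assembled in Corollary \ref{cor:g=3} and the cited results of Biran–Khanevsky.

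\begin{proof}
By Corollary \ref{cor:g=3}, the Gauss image $L=L^n=\mathcal{G}(N^n)$ with $g=3$ and $n\in\{6,12,24\}$ is wide, i.e. $HF(L)\cong H_*(L;\mathbb{Z}_2)\otimes\Lambda$. In particular $HF(L)\neq 0$. Fix $\varphi\in\mathrm{Ham}(Q_n(\mathbb{C}),\omega_{\mathrm{std}})$. Since $\varphi$ is a diffeomorphism, $\varphi(L)$ is a closed embedded Lagrangian submanifold of $Q_n(\mathbb{C})$; moreover Hamiltonian isotopy preserves the homomorphisms $I_{\mu,L}$ and $I_{\omega}$, so $\varphi(L)$ is again monotone with the same minimal Maslov number $N_{\varphi(L)}=N_L=2n/3$, which equals $4$, $8$, $16$ for $n=6$, $12$, $24$ respectively, and in all cases $N_{\varphi(L)}\geq 2$ and $n\geq 2$. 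By the Hamiltonian invariance of Lagrangian Floer homology, $HF(\varphi(L))\cong HF(L)\neq 0$. Applying Proposition \ref{pro:B-K1} to the Lagrangian submanifold $\varphi(L)$, we conclude that $\varphi(L)\cap S^n\neq\emptyset$, where $S^n\subset Q_n(\mathbb{C})$ is the totally geodesic Lagrangian sphere. Since $\varphi$ was arbitrary, this proves the corollary.
\end{proof}
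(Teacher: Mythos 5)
Your proof is correct and follows exactly the paper's route: the paper simply combines Corollary \ref{cor:g=3} (wideness, hence $HF(L)\neq 0$) with Proposition \ref{pro:B-K1}, the Hamiltonian invariance of Floer homology being implicit. Your write-up merely makes that invariance and the hypothesis-checking for $\varphi(L)$ explicit, which is a faithful elaboration rather than a different argument.
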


For any $n$-dimensional submanifold $R$ in $Q_n(\mathbb{C})$, it holds 
\[
\int_{SO(n+2)} \#(\phi (R) \cap S^n)d\mu_{SO(n+2)}(\phi)  \leq  2 \frac{\mathrm{Vol}(SO(n + 2))}{\mathrm{Vol}(S^n)} 
\mathrm{Vol}(R),
\]
where $\phi\in SO(n+2)\subset \mathrm{Ham}(Q_n(\mathbb{C}), \omega_{\mathrm{std}})$ 
((6.7)  in \cite{IST}).
Finally, we give an estimate of the volume of Gauss images 
deformed by Hamiltonian isotopies of $Q_n(\mathbb{C})$.
Namely, applying above to $R=\varphi(L)$, we obtain
\begin{corollary}
For a Lagrangian submanifold  $L$  in Corollary \ref{cor:g=3},
\[
\frac{1}{2}\, \mathrm{Vol}(S^n)\leq\mathrm{Vol}(\varphi(L))
\]
holds for any  $\varphi \in \mathrm{Ham}(Q_n(\mathbb C), \omega_{\mathrm{std}})$.
\end{corollary}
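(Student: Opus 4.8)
The plan is to chain together the two facts that are already in hand. First I would recall from Corollary~\ref{cor:g=3} that for a Gauss image $L^n=\mathcal{G}(N^n)$ with $g=3$ and $n=6,12,24$ the Floer homology is wide, hence in particular $HF(L)\neq 0$. Applying Proposition~\ref{pro:B-K1} to this $L$ (which is a closed monotone Lagrangian submanifold of $Q_n(\mathbb{C})$ with $N_L=2m\geq 2$ by Proposition~\ref{MinMaslovGaussImage}, and $n\geq 2$), we get $L\cap S^n\neq\emptyset$ for the fixed totally geodesic Lagrangian sphere $S^n\subset Q_n(\mathbb{C})$.

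Next I would promote this to the conclusion for all Hamiltonian images. Since $HF(L)$ is invariant under Hamiltonian isotopies, for any $\varphi\in\mathrm{Ham}(Q_n(\mathbb{C}),\omega_{\mathrm{std}})$ the Lagrangian $\varphi(L)$ is again a closed monotone embedded Lagrangian with the same minimal Maslov number and with $HF(\varphi(L))\cong HF(L)\neq 0$. Proposition~\ref{pro:B-K1} applies verbatim to $\varphi(L)$ and yields $\varphi(L)\cap S^n\neq\emptyset$. This is exactly the assertion of Corollary~\ref{cor: LSn}, so no genuinely new argument is needed.

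The only point requiring a small amount of care is the hypothesis bookkeeping for Proposition~\ref{pro:B-K1}: one should observe that monotonicity, the minimal Maslov number, embeddedness, and closedness are all preserved by a symplectomorphism, so $\varphi(L)$ satisfies the same hypotheses as $L$; alternatively, one invokes the Hamiltonian isotopy invariance of $HF$ directly. There is no serious obstacle here — the statement is a formal corollary of Corollary~\ref{cor:g=3} together with Proposition~\ref{pro:B-K1}. If I wanted to make the deduction completely self-contained I could also note that wideness of $L$ already gives $L\cap\varphi(L)\neq\emptyset$, so the substance is entirely transported from the cited results.

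Finally, for the volume estimate I would simply substitute $R=\varphi(L)$ into the integral-geometric inequality $(6.7)$ from \cite{IST} and use Corollary~\ref{cor: LSn}. Since $\phi(\varphi(L))\cap S^n\neq\emptyset$ for every $\phi\in SO(n+2)$, the integrand $\#(\phi(\varphi(L))\cap S^n)$ is at least $1$ whenever it is finite, so the left-hand side is bounded below by $\mathrm{Vol}(SO(n+2))$; comparing with the right-hand side $2\,\mathrm{Vol}(SO(n+2))\mathrm{Vol}(S^n)^{-1}\mathrm{Vol}(\varphi(L))$ and cancelling $\mathrm{Vol}(SO(n+2))$ gives $\tfrac12\,\mathrm{Vol}(S^n)\leq\mathrm{Vol}(\varphi(L))$. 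The only subtlety is the almost-everywhere transversality needed to make $\#(\phi(\varphi(L))\cap S^n)$ finite for $\mu_{SO(n+2)}$-a.e.\ $\phi$, which is a standard Sard-type argument and is already implicit in the derivation of $(6.7)$; so again there is no real obstacle.
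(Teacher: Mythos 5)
Your proposal is correct and follows essentially the same route as the paper: apply Corollary \ref{cor: LSn} (with the Hamiltonian diffeomorphism $\phi\circ\varphi$, using $SO(n+2)\subset\mathrm{Ham}(Q_n(\mathbb{C}),\omega_{\mathrm{std}})$) to bound the integrand in inequality (6.7) of \cite{IST} below by $1$, then substitute $R=\varphi(L)$ and cancel $\mathrm{Vol}(SO(n+2))$. The preliminary re-derivation of Corollary \ref{cor: LSn} is harmless but not needed, since that corollary is already available.
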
 

\begin{remark} 
\begin{enumerate}
\item[(1)]
From Gorodski and Podesta's  classification \cite{Gorodski-Podesta} of all compact tight 
$\mathbb{Z}_2$-homology Lagrangian spheres in simply connected compact homogeneous K\"{a}hler manifolds, 
we know that any compact tight 
$\mathbb{Z}_2$-homology Lagrangian sphere in $Q_n(\mathbb{C})$ is the standard real form $S^n$ up to congruence.
\item[(2)]
Biran and Cornea \cite{Biran-Cornea} proved that any even dimensional 
closed Lagrangian submanifold $L$ embedded   
in $Q_n(\mathbb{C})$ with $H_1(L;\mathbb{Z})=0$ must be 
a $\mathbb{Z}_2$-homology sphere.
Notice that the Gauss images $L$ with $g=3$ 
are $\mathbb{Z}_2$-homology spheres 
with $H_1(L;\mathbb{Z})\cong \mathbb{Z}_3$.
\end{enumerate}
\end{remark}

\section{Hamiltonian Non-displaceability of Gauss images of isoparametric hypersurfaces with $g=4$}
\label{Sec: g=4}

Now we consider the Gauss images 
$L$
of isoparametric hypersurfaces with $g=4$.
Since $N_L=\frac{2n}{4}=\frac{n}{2}$,
$N_L\geq 3$ leads to $n\geq 6$. 
 Thus except for the only one case $n=4$ when 
$L=\frac{SO(2)\times SO(3)}{(\mathbb{Z}_2\times 1)\mathbb{Z}_4}$,
the argument in Subsection 3.2 can work 
and the lifted Floer homology $HF^{\bar{L}}(L)$ is well-defined for any covering $\bar{L}$ of $L$.

\begin{proposition} \label{prop: g=4}
Let $L=\mathcal{G}(N)\subset Q_n(\mathbb{C})$ be the Gauss image of 
an isoparametric hypersurface $N$ with $g=4$ and $m_1\geq 2$. 
Then $HF^{\bar{L}}(L)\neq 0$ for $\bar{L}=N$.
\end{proposition}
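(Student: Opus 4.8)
The plan is to use Damian's spectral sequence (Theorem \ref{Th:Damian}) applied to the covering $\pi\colon \bar L = N \to L = N/\mathbb{Z}_g$ with $g=4$, and to show that the $E_\infty$-page cannot vanish identically, which would force $HF^{\bar L}(L)\neq 0$. The key numerical input is Proposition \ref{MinMaslovGaussImage}: for $g=4$ we have $N_L = m_1+m_2 = n/2$, and the assumption $m_1\geq 2$ guarantees $N_L\geq 3$ so that the lifted theory is available; moreover $N_L$ is even precisely when $m_1+m_2$ is even, and in the odd case $\bar L = N$ is non-orientable, which I will need to track carefully since orientability of $N$ affects $H_n(N;\mathbb{Z}_2)$ only trivially (it is always $\mathbb{Z}_2$ over $\mathbb{Z}_2$ coefficients, so this is not actually an obstacle). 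By the Corollary to Theorem \ref{Th:Damian}, when $g=4$ the spectral sequence collapses at $E_3$, so it suffices to understand $d_1$ and $d_2$ on $H_*(N;\mathbb{Z}_2)$.

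The main step is therefore to pin down the $\mathbb{Z}_2$-homology of $N^n$ (an isoparametric hypersurface with $g=4$, multiplicities $m_1,m_2$) and to analyze the differentials $\delta_1 = [\partial_1^{\bar L}]$ and $\delta_2$. By Münzner's results the Betti numbers of $N$ over $\mathbb{Z}_2$ are known explicitly: $N$ has the $\mathbb{Z}_2$-cohomology generated in degrees that are multiples related to $m_1,m_2$, with total dimension $\sum_k \dim H_k(N;\mathbb{Z}_2)=2g=8$, concentrated in degrees $0, m_1, m_2, m_1+m_2, \dots$ up to $n$. The crucial observation I would exploit is a degree/parity argument: the differential $\delta_r$ shifts the relevant grading by $rN_L - 1 = r(m_1+m_2)-1$, and the homology of $N$ lives only in a restricted set of degrees modulo $m_1+m_2$ (essentially $0, m_1, m_2$ and $m_1+m_2\equiv 0$). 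One shows that for degree reasons $\delta_1$ and $\delta_2$ cannot be simultaneously surjective onto, and injective out of, the classes in degree $0$ and degree $n$ (the fundamental class and the point class of the $\mathbb{Z}_2$-homology), so that some generator of $H_0(N;\mathbb{Z}_2)$ or $H_n(N;\mathbb{Z}_2)$ survives to $E_\infty$. Concretely, the class in bidegree corresponding to $H_0(\bar L;\mathbb{Z}_2)$ at $p=0$ is a permanent cycle (nothing maps into it since $p=0$ is extremal for the incoming differential direction $p\mapsto p+r$ only if... — here one must check the indexing of $\delta_r\colon V_r^{p,q}\to V_r^{p-r,q+r-1}$, so $p=0$ receives differentials from $p=r>0$), hence I must instead locate an extremal $p$ from which no differential emanates and into which none arrives; the natural candidate is the top class at the largest $p$ with nonzero $V_1^{p,q}$.

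The hard part will be controlling the $d_1$-differential $[\partial_1^{\bar L}]\colon H_{p+q-pN_L}(\bar L;\mathbb{Z}_2)\to H_{p+q-1-(p-1)N_L}(\bar L;\mathbb{Z}_2)$ on the non-extremal generators — i.e., showing it is ``as small as possible'' — since this map is geometric (it counts lifted holomorphic strips) and not purely combinatorial. The strategy to handle this is the standard one: compare with the \emph{absolute} Floer homology via the covering. One uses that the lifted complex $CF^{\bar L}(L)$ surjects onto (or is related by a chain map to) $CF(L)$ and $CF(\bar L)$-type complexes, and that the quantum/Maslov-graded structure constrains $d_1$ to be a component of the quantum differential of $\bar L$, which on $H_*(\bar L;\mathbb{Z}_2)$ is determined by the quantum product with the class counted by $N_L$-Maslov disks. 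In fact, the cleanest route is to observe that $N_L = n/2 > n/g' $ comparisons are irrelevant and instead to use that on the $E_1$-page the only possibly-nonzero $d_1$ goes between the two ``middle'' degrees $m_1$ and $m_2$ (when $m_1 \neq m_2$) or acts within the doubled middle degree (when $m_1=m_2$); a rank count then shows that even if $d_1$ and $d_2$ are of maximal rank, the Euler-characteristic-type invariant $\sum (-1)^? \dim$ — or simply the mod-$2$ count of surviving generators — is forced to be nonzero because the total dimension $8$ of $H_*(N;\mathbb{Z}_2)$ together with the placement of generators in only $\leq 4$ distinct values of $p$ leaves an odd or otherwise nonzero residue. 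I would organize the write-up as: (i) recall $H_*(N;\mathbb{Z}_2)$ and $N_L=m_1+m_2$; (ii) tabulate the $E_1$-page, identifying which bidegrees $(p,q)$ carry nonzero groups; (iii) observe that $\delta_1,\delta_2$ can only connect a short list of bidegrees, and that the collapse at $E_3$ (the Corollary) means $E_3=E_\infty$; (iv) do the rank count showing $E_\infty \neq 0$, with the case $m_1=m_2$ (where more homology piles into one degree) treated separately and being the genuinely delicate sub-case. Throughout, the essential leverage over the unknown geometric map $[\partial_1^{\bar L}]$ is purely its degree, so no count of holomorphic curves is actually needed — that is what makes the argument go through.
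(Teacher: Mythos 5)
Your overall strategy coincides with the paper's: apply Damian's spectral sequence to the covering $\bar L=N\to L$, feed in M\"unzner's computation of $H_*(N;\mathbb{Z}_2)$, use the collapse at $E_3$, and constrain the differentials purely by their degree. But the one computation that actually makes the proof work is missing. Since $V_1^{p,q}=H_{p+q-pN_L}(\bar L;\mathbb{Z}_2)$ and $\delta_r$ maps $V_r^{p,q}$ to $V_r^{p-r,q+r-1}$, the differential $\delta_r$ raises the homological degree by exactly $rN_L-1$; the whole argument therefore reduces to exhibiting a single degree $k$ with $H_k(N;\mathbb{Z}_2)\neq 0$ such that $H_{k\pm(N_L-1)}$ and $H_{k\pm(2N_L-1)}$ all vanish (only $r=1,2$ matter because $E_3=E_\infty$). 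With $N_L=m_1+m_2$ and $n=2(m_1+m_2)$, the hypothesis $m_1\geq 2$ enters precisely here: it guarantees that $N_L-1$ and $2N_L-1=n-1$ (and likewise $1$) are not among M\"unzner's degrees $0,m_1,m_2,m_1+m_2,2m_1+m_2,m_1+2m_2,n$. The paper takes $k=m_1+m_2$, where $H_k\cong\mathbb{Z}_2\oplus\mathbb{Z}_2$ is flanked by $H_1=H_{n-1}=0$, assumes $HF^{\bar L}(L)=0$, extracts from $E_3^{0,q}=0$ the exact sequence $H_{k+1-N_L}\to H_k\to H_{k-1+N_L}$, and gets the contradiction $0\to\mathbb{Z}_2\oplus\mathbb{Z}_2\to 0$. (Your candidate $k=0$ would also work, by the same vanishing.) You gesture at this but never commit to a degree, never verify the required vanishing, and never record that this is where $m_1\geq 2$ is used beyond ensuring $N_L\geq 3$.

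The three substitutes you propose in place of that computation all fail. First, there is no ``largest $p$ with nonzero $V_1^{p,q}$'': for every $p\in\mathbb{Z}$ the column $\{V_1^{p,q}\}_{q}$ is a full copy of $H_*(N;\mathbb{Z}_2)$, so extremality must be sought in the homological degree $p+q-pN_L$, not in $p$; this is exactly the indexing point you flag and then abandon. Second, the detour through absolute Floer homology and quantum products is neither needed nor developed; degree considerations alone suffice. Third, the rank count on the total dimension $8=2g$ cannot close the argument: $8$ is even, so there is no parity obstruction, and without the degree analysis nothing prevents the differentials from having maximal rank and killing every generator. Finally, your claim that the only possibly nonzero $d_1$ connects the degrees $m_1$ and $m_2$ is false: that would require $m_2-m_1=N_L-1=m_1+m_2-1$, i.e.\ $2m_1=1$. (The only $d_1$ not excluded by degree when $m_1\geq 2$ is $H_{m_2}\to H_{2m_1+m_2}$ in the special case $m_2=m_1+1$, which does not affect the class the proof needs to survive.)
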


\begin{proof}
Consider the $\mathbb{Z}_4$-covering $\bar{L}=N^n$ over $L^n$.
We argue by contradiction.
Assume that 
$HF^{\bar{L}}(L)=0$. 
Since $\nu=[\frac{\dim L +1}{N_L}]=2$,  
Damian's spectral sequence $\{E^{p,q}_r, d_r\}$ collapses at $3$-step.
Then
\[
0=E^{0,q}_3=V^{0,q}_3
=\frac{\mathrm{Ker}(\delta_2: V^{0,q}_2 \rightarrow V^{-2,q+1}_2)}{\mathrm{Im}(\delta_2: V^{2,q-1}_2\rightarrow V^{0,q}_2)}
\]
and thus we have the following exact sequence
\begin{equation}\label{seq:g=4}
V^{2,q-1}_2\rightarrow V^{0,q}_2\rightarrow V^{-2,q+1}_2
\end{equation}
for any $q\in \mathbb{Z}$.
Since \begin{eqnarray*}
V^{2,q-1}_2&=&\frac{\mathrm{Ker}([\partial_1^{\bar{L}}]: H_{q+1-2N_L}(\bar{L};\mathbb{Z}_2) \rightarrow H_{q-N_L} (\bar{L};\mathbb{Z}_2) )}
{\mathrm{Im}([\partial_1^{\bar{L}}]: H_{q+2-3N_L}(\bar{L};\mathbb{Z}_2)\rightarrow H_{q+1-2N_L}(\bar{L};\mathbb{Z}_2))},\\
V^{-2,q+1}_2&=&\frac{\mathrm{Ker}([\partial_1^{\bar{L}}]: H_{q-1+2N_L}(\bar{L};\mathbb{Z}_2) \rightarrow H_{q-2+3N_L}(\bar{L};\mathbb{Z}_2))}
{\mathrm{Im}([\partial_1^{\bar{L}}]: H_{q+N_L}(\bar{L};\mathbb{Z}_2)\rightarrow H_{q-1+2N_L}(\bar{L};\mathbb{Z}_2))},
\end{eqnarray*} 
we see that $V^{2,q-1}_2=V^{-2,q+1}_2=0$ when $2\leq q\leq n-2$.
Then it follows from \eqref{seq:g=4} that 
\begin{eqnarray*}
0=V^{0,q}_2
&=&\frac{\mathrm{Ker}([\partial_1^{\bar{L}}]: H_{q}(\bar{L};\mathbb{Z}_2) \rightarrow H_{q-1+N_L}(\bar{L};\mathbb{Z}_2))}
{\mathrm{Im}([\partial_1^{\bar{L}}]: H_{q+1-N_L}(\bar{L};\mathbb{Z}_2)\rightarrow H_{q}(\bar{L};\mathbb{Z}_2))}
\end{eqnarray*} 
holds for $2\leq q\leq n-2$.

Putting $q=N_L=m_1+m_2$, we have the exact sequence
\begin{equation}\label{eq:seq g=4}
H_{1}(\bar{L};\mathbb{Z}_2)\rightarrow H_{m_1+m_2}(\bar{L}; \mathbb{Z}_2)\rightarrow H_{2(m_1+m_2)-1}(\bar{L}; \mathbb{Z}_2).
\end{equation}

Recall that by M\"{u}nzner's result \cite{Muenzner1} the $\mathbb{Z}_2$-homology of $N^n$ is given by
\[
H_k(N; \mathbb{Z}_2)\cong \left\{
                        \begin{array}{ll}
                          \mathbb{Z}_2, & \hbox{ for } k=0, m_1, m_2, 2m_1+m_2, m_1+2m_2, n,\\
                        \mathbb{Z}_2\oplus \mathbb{Z}_2, &  \hbox{ for } k=m_1+m_2,\\
                        0, & \hbox{ otherwise.}
                        \end{array}
                      \right.
\]

By the assumption that $m_1\geq 2$, 
(\ref{eq:seq g=4})  leads to the  exact sequence
$0\rightarrow \mathbb{Z}_2\oplus \mathbb{Z}_2 \rightarrow 0,$
which is a contradiction. 
\end{proof}

\begin{corollary} 
Under the same assumption as in Proposition \ref{prop: g=4}, 
the Gauss image $L^n\subset (Q_n(\mathbb{C}), \omega_{\mathrm{std}})$ is Hamiltonian non-displaceable.
\end{corollary}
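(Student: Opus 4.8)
The plan is to deduce Hamiltonian non-displaceability of $L^n$ directly from Proposition \ref{prop: g=4} together with Damian's general theory of lifted Floer homology. The key structural fact I would invoke is that the lifted Floer homology $HF^{\bar L}(L)$ for any covering $\pi\colon\bar L\to L$, and in particular for the $\mathbb{Z}_4$-covering $\bar L = N^n\to L^n$, is defined only when $L\pitchfork\varphi_1(L)\neq\emptyset$, and its non-vanishing is by construction an obstruction to displacing $L$ off itself. More precisely, if $\varphi\in\mathrm{Ham}(Q_n(\mathbb{C}),\omega_{\mathrm{std}})$ were to satisfy $L\cap\varphi(L)=\emptyset$, then (after a generic Hamiltonian perturbation keeping the intersection empty) the Floer complex $CF(L)$ and hence all the lifted complexes $CF^{\bar L}(L)$ would have no generators, forcing $HF^{\bar L}(L)=0$. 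This contradicts Proposition \ref{prop: g=4}. So the corollary follows once we have checked that the hypotheses of Damian's construction genuinely apply.

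Concretely, I would organize the argument as follows. First, record that under the standing assumption $g=4$ and $m_1\geq 2$ we have $n\geq 6$ and hence $N_L = n/2 \geq 3$, so that the lifted Floer homology of Section \ref{Sec: Damian} is well-defined for the covering $\bar L = N^n$; this is exactly the point already noted in the paragraph preceding Proposition \ref{prop: g=4}. Second, recall that $L = \mathcal{G}(N^n)$ is a closed embedded monotone Lagrangian submanifold of the closed symplectic manifold $(Q_n(\mathbb{C}),\omega_{\mathrm{std}})$, by Proposition \ref{MinMaslovGaussImage}, so it is a legitimate input for the theory. Third, argue by contradiction: suppose $L$ is Hamiltonian displaceable, i.e. there is $\varphi_1\in\mathrm{Ham}$ with $\varphi_1(L)\cap L=\emptyset$. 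Since displaceability is an open condition and transversality is generic, one may further arrange $L\pitchfork\varphi_1(L)$ (still empty), so the data $(\mathcal{C},\Gamma)$ of Section \ref{Sec: Damian} is empty and therefore $CF^{\bar L}(L)=0$, whence $HF^{\bar L}(L)=0$ for every covering, in particular for $\bar L = N^n$. This contradicts Proposition \ref{prop: g=4}. Therefore $L$ is Hamiltonian non-displaceable.

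I expect the only genuinely delicate point to be the invariance statement that makes the contradiction go through: one must know that $HF^{\bar L}(L)$, as defined via a fixed choice of Hamiltonian perturbation and almost complex structure, is independent of these choices up to isomorphism and in particular agrees with the (trivial) homology computed from an empty intersection when $L$ is displaceable. This is precisely the content of Damian's invariance result, recalled in Section \ref{Sec: Damian} (the lifted Floer homology "is also invariant under the Hamiltonian isotopies of $L$"), so in the write-up I would simply cite it rather than reprove it. A secondary, purely bookkeeping, point is to make sure the perturbation achieving $\varphi_1(L)\cap L=\emptyset$ can be taken within the class of Hamiltonian isotopies for which the Floer-type construction is set up; this is standard and follows because $\mathrm{Ham}$ is path-connected and displaceability is stable under small perturbations.

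Finally, I would remark that the conclusion is exactly parallel to the implication "wide $\Rightarrow$ Hamiltonian non-displaceable" used in the $g=3$ case via Proposition \ref{Prop:Biran-Cornea} and Corollary \ref{cor:g=3}: there the non-vanishing of the ordinary Floer homology $HF(L)$ does the job, while here, where we have not computed $HF(L)$ itself, the non-vanishing of the \emph{lifted} Floer homology $HF^{\bar L}(L)$ over the $\mathbb{Z}_4$-cover suffices, since a displacing $\varphi$ would kill $HF^{\bar L}(L)$ for \emph{every} cover $\bar L$ simultaneously. This is the reason Damian's refinement, rather than plain Floer homology, is the right tool for the $g=4$, $m_1\geq 2$ range.
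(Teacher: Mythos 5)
Your argument is correct and is exactly the (implicit) reasoning of the paper: the corollary is stated without proof because the non-vanishing of $HF^{\bar{L}}(L)$ established in Proposition \ref{prop: g=4}, combined with the Hamiltonian-isotopy invariance of Damian's lifted Floer homology recalled in Section \ref{Sec: Damian}, immediately rules out a displacing $\varphi$, since an empty (hence vacuously transverse) intersection would force $CF^{\bar{L}}(L)=0$. Your write-up simply makes this standard deduction explicit.
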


\begin{remark}
Note that 
any non-homogeneous isoparametric hypersurface in the standard sphere 
satisfies the assumption of Proposition \ref{prop: g=4}. 
By R.~Takagi's result \cite{Takagi} 
we know that an isoparametric hypersurface $N^{n}$ with $g=4$ and $m_{1}=1$ 
must be $N^{n}=\frac{SO(2)\times SO(k+2)}{{\mathbb Z}_{2}\times SO(k)}\ (n=2k+2, m_{2}=k)$, 
in particular homogeneous. 
\end{remark}

\section{Hamiltonian Non-displaceability of 
the Gauss image of an isoparametric hypersurface with $g=6$}
\label{Sec: g=6}

When $g=6$, $N_L\ge3$ implies $m_1=m_2=2$ \cite{Abresch}.  Thus we focus on the Gauss image $L^{12}=\mathcal{G}(N^{12})$ of 
the isoparametric hypersurface $N^{12}\subset S^{13}(1)$ with $g=6$ and $m_1=m_2=2$.  
Compared with the case when $g=4$, this case is more complicated because $\{E^{p,q}_r, d_r \}$ 
collapses at $4$-step.

\begin{proposition} \label{prop: g=6}
Let $L^{12}=\mathcal{G}(N^{12})\subset Q_{12}(\mathbb{C})$ be the Gauss image of 
an isoparametric hypersurface with $g=6$ and $m_1=m_2=2$. Then $HF^{\bar{L}}(L)\neq 0$ for $\bar{L}=N$.
In particular, $L$ is Hamiltonian non-displaceable in $Q_{12}(\mathbb{C})$.
\end{proposition}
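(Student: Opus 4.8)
The plan is to follow the same strategy as in Proposition~\ref{prop: g=4}, namely to run Damian's spectral sequence for the $\mathbb{Z}_6$-covering $\bar L = N^{12}$ over $L^{12}$ and derive a contradiction from the assumption $HF^{\bar L}(L)=0$. Here $N_L = 2n/g = 24/6 = 4$ and $\dim L = 12$, so $\nu = [\frac{\dim L+1}{N_L}] = [\frac{13}{4}] = 3$ and the spectral sequence collapses at the $4$-step; this is the source of the extra complication compared with $g=4$. The key input is M\"unzner's computation of the $\mathbb{Z}_2$-homology of $N^{12}$ with $g=6$, $m_1=m_2=2$: one has $H_k(N;\mathbb{Z}_2)\cong \mathbb{Z}_2$ for $k=0,2,4,6(\text{twice in the middle}),\dots,12$ — more precisely the Betti numbers are concentrated in the degrees $0,2,4,6,8,10,12$ dictated by M\"unzner's periodicity, with a rank-$2$ group in the middle degree $k = m_1+m_2+\tfrac{(g-2)}{2}\cdot\tfrac{m_1+m_2}{1}$-type position; I would quote the exact table from \cite{Muenzner1}.

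First I would write out, for the $\mathbb{Z}_6$-cover, the vanishing $HF^{\bar L}(L)=0$ as the statement $E^{p,q}_4 = V^{p,q}_4 = 0$ for all $p,q$, and in particular $E^{0,q}_4=0$. Unwinding the collapse at step $4$ exactly as in the $g=4$ case, but now one step further, $V^{0,q}_4 = 0$ gives an exact sequence involving $\delta_3: V^{3,q-2}_3 \to V^{0,q}_3 \to V^{-3,q+1}_3$, and each $V^{*,*}_3$ is in turn a subquotient of the $d_1$-page $H_{*}(\bar L;\mathbb{Z}_2)$ via $\delta_2$ and $\delta_1$. The point is that the source and target groups $V^{\pm 3,*}_3$ and $V^{\pm 2,*}_2$ live in homological degrees shifted by multiples of $N_L=4$, so for a suitable band of $q$ (roughly $3 \le q \le n-3 = 9$) these outer terms vanish for degree reasons — every relevant index lands outside the range $\{0,2,4,6,8,10,12\}$ where $H_*(\bar L;\mathbb{Z}_2)$ is supported, or else in a degree forced to be zero. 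This forces $V^{0,q}_2 = 0$, hence $V^{0,q}_1 = H_q(\bar L;\mathbb{Z}_2)$ modulo images/kernels of $[\partial_1^{\bar L}]$ to vanish, in the relevant band.

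Then I would specialize $q$ to the middle degree. Taking $q = 6 = N_L + 2 = m_1+m_2+\tfrac{(m_1+m_2)}{?}$ (the degree carrying the rank-$2$ homology $\mathbb{Z}_2\oplus\mathbb{Z}_2$), the surviving exact sequence at the $E_1$-page reads
\[
H_{6 - N_L}(\bar L;\mathbb{Z}_2) \to H_6(\bar L;\mathbb{Z}_2) \to H_{6 + N_L - 1}(\bar L;\mathbb{Z}_2),
\]
i.e. $H_2(N;\mathbb{Z}_2) \to H_6(N;\mathbb{Z}_2) \to H_9(N;\mathbb{Z}_2)$, that is $\mathbb{Z}_2 \to \mathbb{Z}_2\oplus\mathbb{Z}_2 \to 0$, which cannot be exact. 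One must be slightly careful that the band of ``free'' $q$ really does reach the middle degree and that the outer $V_2$ and $V_3$ terms feeding into $V^{0,6}$ genuinely vanish; since $N_L=4$ and the homology is spread over degrees $0$ through $12$ in steps of $2$, the shifted indices $6\pm 4 = 2,10$ and $6 \pm 8 = -2, 14$ behave well, but $6-4=2$ and $6+4-1 = 9$ must be handled, and the $\delta_2,\delta_3$ contributions to $V^{2,q-1}$ and $V^{3,q-2}$ need checking against the table.

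The main obstacle I anticipate is precisely this bookkeeping: with collapse at the $4$-step there are two layers of subquotients ($\delta_1$ then $\delta_2$ then $\delta_3$) to peel off before reaching an honest homology-level exact sequence, and one must verify that the interfering differentials $\delta_2$, $\delta_3$ all vanish on the pieces relevant to degree $q=6$ — equivalently that none of the classes in the offending $\mathbb{Z}_2\oplus\mathbb{Z}_2$ can be killed by a longer differential landing in or emanating from a degree where $H_*(N;\mathbb{Z}_2)\ne 0$. This amounts to checking a short finite list of index congruences modulo $N_L=4$ against M\"unzner's table; once that is in hand, the rest is identical in spirit to the $g=4$ argument. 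Finally, as in the $g=4$ case, Damian's result that $HF^{\bar L}(L)\ne 0$ forces $L\cap\varphi(L)\ne\emptyset$ for every $\varphi\in\mathrm{Ham}(Q_{12}(\mathbb{C}),\omega_{\mathrm{std}})$, giving Hamiltonian non-displaceability.
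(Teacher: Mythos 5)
Your overall strategy is exactly the paper's: assume $HF^{\bar L}(L)=0$, use collapse of Damian's spectral sequence at the $4$-step, peel off the three layers of subquotients to reach an exact sequence at the $E_1$-level in degree $q=6$, and contradict M\"unzner's homology of $N^{12}$. However, the key final step as you wrote it is broken, and it only appears to work because two separate errors happen to compensate. First, $[\partial_1^{\bar L}]$ shifts homological degree by $N_L-1=3$, not by $N_L=4$ (it maps $H_{p+q-pN_L}$ to $H_{p+q-1-(p-1)N_L}$), so the exact sequence extracted from $V^{0,6}_2=0$ is
\[
H_{3}(\bar L;\mathbb{Z}_2)\rightarrow H_{6}(\bar L;\mathbb{Z}_2)\rightarrow H_{9}(\bar L;\mathbb{Z}_2),
\]
not $H_2\to H_6\to H_9$. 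Second, your recollection of M\"unzner's table is wrong: for $g=6$, $m_1=m_2=2$ the $\mathbb{Z}_2$-Betti numbers of $N^{12}$ (which is $G_2/T^2$ in the homogeneous model) are $1,2,2,2,2,2,1$ in degrees $0,2,4,6,8,10,12$ --- the rank-$2$ groups occur in \emph{every} even degree from $2$ to $10$, not only in the middle, and the total rank is $2g=12$ as it must be. Consequently $H_2\cong\mathbb{Z}_2\oplus\mathbb{Z}_2$, and your sequence $H_2\to H_6\to H_9$ becomes $\mathbb{Z}_2^{\,2}\to\mathbb{Z}_2^{\,2}\to 0$, which can perfectly well be exact: no contradiction. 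The correct sequence $H_3\to H_6\to H_9$ reads $0\to\mathbb{Z}_2^{\,2}\to 0$ and gives the contradiction immediately.

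Two smaller points. The bookkeeping you defer (``the outer $V_2$ and $V_3$ terms feeding into $V^{0,6}$ need checking against the table'') is in fact trivial and does not depend on the fine structure of the table: $V^{2,5}_2$ and $V^{-2,7}_2$ are subquotients of $H_{6-7}=H_{-1}$ and $H_{6+7}=H_{13}$, which vanish simply because the degrees lie outside $[0,12]$; similarly $V^{3,q-2}_3$ and $V^{-3,q+2}_3$ sit in degrees $q-11$ and $q+11$, giving $V^{0,q}_3=0$ for the whole band $2\le q\le 10$. Also the target of $\delta_3$ from $V^{0,q}_3$ is $V^{-3,q+2}_3$, not $V^{-3,q+1}_3$ (the rule is $\delta_r\colon V^{p,q}_r\to V^{p-r,q+r-1}_r$). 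None of this changes the architecture of the argument, but as written the decisive contradiction rests on a false homology group and a misplaced index, so the proof does not go through until both are corrected.
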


\begin{proof}
Take the $\mathbb{Z}_6$-covering $\bar{L}=N^{12}$ over $L^{12}$.
Since $\nu=[\frac{12+1}{4}]=3$, Damian's spectral sequence $\{E^{p,q}_r, d_r\}$ collapses at $4$-step.
Assume that $HF^{\bar{L}}(L)=0$. Then we have
\[
0=E^{0,q}_4=V^{0,q}_4
=\frac{\mathrm{Ker}(\delta_3: V^{0,q}_3 \rightarrow V^{-3,q+2}_3)}{\mathrm{Im}(\delta_3: V^{3,q-2}_3\rightarrow V^{0,q}_3)}
\]
and the following exact sequence
\begin{equation}\label{eq:V_3}
V^{3,q-2}_3\rightarrow V^{0,q}_3\rightarrow V^{-3,q+2}_3
\end{equation}
for any $q\in \mathbb{Z}$. 
 
Notice that
\begin{eqnarray*}
V^{3,q-2}_3&=&\frac{\mathrm{Ker}(\delta_2: V^{3,q-2}_2\rightarrow V^{1,q-1}_2)}
{\mathrm{Im}(\delta_2: V^{5,q-3}_2\rightarrow V^{3,q-2}_2)},\\
V^{3,q-2}_2
&=&\frac{\mathrm{Ker}([\partial_1^{\bar{L}}]: H_{q-11}(\bar{L};\mathbb{Z}_2) \rightarrow H_{q-8}(\bar{L};\mathbb{Z}_2) )}
{\mathrm{Im}([\partial_1^{\bar{L}}]: H_{q-14}(\bar{L};\mathbb{Z}_2) \rightarrow H_{q-11}(\bar{L};\mathbb{Z}_2) )}.
\end{eqnarray*} 
Consequently, $V^{3,q-2}_3=0$ when  $q\leq 10$. 
Similarly, from
\begin{eqnarray*}
V^{-3,q+2}_3&=&\frac{\mathrm{Ker}(\delta_2: V^{-3,q+2}_2\rightarrow V^{-5,q+3}_2)}
{\mathrm{Im}(\delta_2: V^{-1,q+1}_2\rightarrow V^{-3,q+2}_2)},\\
V^{-3,q+2}_2
&=&\frac{\mathrm{Ker}([\partial_1^{\bar{L}}]: H_{q+11}(\bar{L};\mathbb{Z}_2) \rightarrow H_{q+14}(\bar{L};\mathbb{Z}_2))}
{\mathrm{Im}([\partial_1^{\bar{L}}]: H_{q+8}(\bar{L};\mathbb{Z}_2)\rightarrow H_{q+11}(\bar{L};\mathbb{Z}_2))},
\end{eqnarray*} 
we see that $V^{-3,q+2}_3=0$ if  $q\geq 2$.
Thus (\ref{eq:V_3}) leads to
$V^{0,q}_3=0$ for $ 2\leq q\leq 10$.
Due to
\[
0=V^{0,q}_3
=\frac{\mathrm{Ker}(\delta_2: V^{0,q}_2 \rightarrow V^{-2,q+1}_2)}{\mathrm{Im}(\delta_2: V^{2,q-1}_2\rightarrow V^{0,q}_2)},
\]
 we get the exact sequence
$ V^{2,q-1}_2\rightarrow V^{0,q}_2 \rightarrow V^{-2,q+1}_2$
 for any $2\leq q\leq 10$.
Thanks to
\begin{eqnarray*}
V^{2,q-1}_2
&=&\frac{
\mathrm{Ker}
([\partial_1^{\bar{L}}]: H_{q-7} (\bar{L};\mathbb{Z}_2)\rightarrow H_{q-4} (\bar{L};\mathbb{Z}_2))}
{\mathrm{Im}([\partial_1^{\bar{L}}]: H_{q-10} (\bar{L};\mathbb{Z}_2)\rightarrow H_{q-7} (\bar{L};\mathbb{Z}_2))},\\
V^{-2,q+1}_2
&=&\frac{\mathrm{Ker}([\partial_1^{\bar{L}}]: H_{q+7} (\bar{L};\mathbb{Z}_2) \rightarrow H_{q+10} (\bar{L};\mathbb{Z}_2))}
{\mathrm{Im}([\partial_1^{\bar{L}}]: H_{q+4} (\bar{L};\mathbb{Z}_2)\rightarrow H_{q+7} (\bar{L};\mathbb{Z}_2))},
\end{eqnarray*} 
we derive
$V^{0,6}_2=0$. 
Now because 
\begin{eqnarray*}
0&=&V^{0,6}_2
=\frac{\mathrm{Ker}([\partial_1^{\bar{L}}]: H_{6} (\bar{L};\mathbb{Z}_2) \rightarrow H_{9} (\bar{L};\mathbb{Z}_2))}
{\mathrm{Im}([\partial_1^{\bar{L}}]: H_{3} (\bar{L};\mathbb{Z}_2)\rightarrow H_{6} (\bar{L};\mathbb{Z}_2))},
\end{eqnarray*}
we get the exact sequence 
$H_3(\bar{L};\mathbb{Z}_2) \rightarrow H_6(\bar{L};\mathbb{Z}_2) \rightarrow H_9(\bar{L};\mathbb{Z}_2)$, 
which contradicts 
the facts in \cite{Muenzner1} that $H_3(\bar{L};\mathbb{Z}_2)\cong H_9(\bar{L};\mathbb{Z}_2)=0$ 
and $H_6(\bar{L};\mathbb{Z}_2)\cong \mathbb{Z}_2\oplus \mathbb{Z}_2$. 
\end{proof}

Therefore we complete the proof of Theorem \ref{Main Thm}.

\section{Open problems and a conjecture}
\label{Sec: Open problems}

The following are open problems for further research:

\begin{enumerate}
\item We cannot apply the same method above to determine the Hamiltonian 
non-displaceability of Gauss images of the following isoparametric hypersurfaces:
\begin{itemize}
\item[(i)] $g=3$, $(m_1, m_2)=(1,1)$, $n=3$;
\item[(ii)] $g=4$, $(m_1, m_2)=(1,k)$, $n=2(1+k)$;
\item[(iii)] $g=6$, $(m_1, m_2)=(1,1)$, $n=6$.
\end{itemize}
Consider how we can do in these cases. 

\item 
Determine the Floer homology $HF(L)$ of Gauss images for $(g,n)=(3,3)$ and $g=4, 6$. 

\item 
Describe $H_*(L;\mathbb{Z}_2)$ of the Gauss images $L=N/\mathbb{Z}_g$ for $g=4, 6$ explicitly.

\end{enumerate}

\smallskip\noindent
According to the present work and a private communication with Hajime Ono, 
he and the authors give 
\begin{conjecture}
Any compact connected minimal Lagrangian submanifold in an irreducible Hermitian symmetric space of compact type 
is Hamiltonian non-displaceable.
\end{conjecture}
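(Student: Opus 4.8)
The plan is to reduce the conjecture to a Floer-theoretic nonvanishing statement and then to feed minimality into that nonvanishing. Let $M$ be an irreducible Hermitian symmetric space of compact type and $L\subset M$ a compact connected minimal Lagrangian. First I would record the entry point that makes the machinery of Section \ref{Sec: Damian} available: such an $M$ is K\"ahler--Einstein with positive Einstein constant, and a compact minimal Lagrangian in a K\"ahler--Einstein manifold with positive Einstein constant is automatically monotone (this is exactly the mechanism behind Proposition \ref{MinMaslovGaussImage}). Thus $L$ is monotone, and once $N_L\geq 2$ the Floer homology $HF(L)$ is defined, while once $N_L\geq 3$ the lifted homology $HF^{\bar L}(L)$ of any covering $\pi:\bar L\to L$ is defined. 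Since a displaceable Lagrangian has vanishing Floer homology, it suffices to prove $HF(L)\neq 0$, or more generally $HF^{\bar L}(L)\neq 0$ for some covering, to conclude Hamiltonian non-displaceability.

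Second, I would run Damian's spectral sequence (Theorem \ref{Th:Damian}) for the universal cover, or the maximal abelian cover, $\bar L\to L$, exactly as in the proofs of Propositions \ref{prop: g=4} and \ref{prop: g=6}: the $E_1$-page is $H_*(\bar L;\mathbb{Z}_2)\otimes\Lambda$, and the differentials $d_r$ shift total degree by $rN_L-1$. The goal, generalizing those propositions, is to show that a distinguished class — the fundamental class $[\bar L]$ or the point class — cannot be annihilated by any differential and hence survives to $E_\infty\cong HF^{\bar L}(L)$. Equivalently, via the closed--open map $QH^*(M;\Lambda)\to HF^*(L,L)$ one would show that the ambient unit survives, which already forces $HF(L)\neq 0$; here the quantum cohomology ring of an irreducible Hermitian symmetric space is well understood (and frequently semisimple), and the module structure of $HF^*(L,L)$ over it constrains the admissible differentials and is the natural vehicle for importing ambient information. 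A sensible first stage is the homogeneous case, where $L$ is an orbit and symmetric-space and representation-theoretic methods (as used for the Gauss images) give explicit control of $H_*(\bar L;\mathbb{Z}_2)$ and of the covering group.

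The hard part will be the passage beyond monotonicity. Monotonicity alone cannot suffice, since displaceable monotone Lagrangians occur in other settings; minimality must therefore be used in an essential way, and it is precisely this that replaces M\"unzner's explicit homology computation — the engine driving the contradictions in Sections \ref{Sec: g=4}--\ref{Sec: g=6}, and unavailable for a general minimal Lagrangian. I would try to extract the missing input from the variational nature of minimality: the interplay between minimality, Hamiltonian stability, and the spectrum of the Laplacian should constrain $H_1(L)$ and the low Betti numbers, while the balancing condition built into minimality should constrain the count of Maslov-index-two holomorphic disks and hence the first differential $d_1=[\partial_1^{\bar L}]$. The central obstacle is to convert the geometric word \emph{minimal} into a usable homological or disk-counting statement that prevents the spectral sequence from collapsing to zero; in the absence of the rigid topological data of the isoparametric setting, the argument has no foothold without such a statement.

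Finally, a separate difficulty to dispose of is the small–Maslov regime. When $N_L=1$ the lifted spectral sequence is unavailable, and when $N_L=2$ the Floer theory itself is delicate; these are exactly the thresholds that produce the unresolved exceptions in Theorem \ref{Main Thm} and the open cases of Section \ref{Sec: Open problems}. I expect these cases to require the most care and to need ad hoc treatment — for instance a direct computation of $HF(L)$ allowing bounding cochains, or a degeneration argument — rather than the uniform spectral-sequence mechanism, and I would attack them only after the structural input from minimality in the previous paragraph is in hand.
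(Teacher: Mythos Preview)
The statement you attempted is a \emph{conjecture}, not a theorem: the paper does not prove it and offers no argument beyond listing it in Section~\ref{Sec: Open problems} as motivated by the partial results of Theorem~\ref{Main Thm}. There is therefore no ``paper's own proof'' to compare your proposal against, and your write-up is, as you yourself signal throughout (``the plan is'', ``I would try to'', ``the central obstacle is''), a research programme rather than a proof.

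As a programme it is sensible and honestly self-assessed, but it does not close the gap. The reduction to monotonicity via the K\"ahler--Einstein condition is correct and is indeed the mechanism behind Proposition~\ref{MinMaslovGaussImage}. However, the decisive step --- extracting from minimality a constraint on $H_*(\bar L;\mathbb{Z}_2)$ or on the Maslov-two disk counts strong enough to prevent the spectral sequence from killing everything --- is precisely what is missing, and you do not supply a candidate mechanism for it. The analogies you invoke (Hamiltonian stability, Laplace spectrum, ``balancing'') do not currently yield statements of the required shape; in the paper's own successful cases (Propositions~\ref{prop: g=4} and~\ref{prop: g=6}) the contradiction comes from M\"unzner's explicit $\mathbb{Z}_2$-homology of $N$, and there is no known substitute for that input in the general minimal Lagrangian setting. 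Moreover, even in the highly structured isoparametric family the method fails to cover $N_L\leq 2$ (the exceptions in Theorem~\ref{Main Thm}), so your proposed uniform spectral-sequence route cannot succeed without genuinely new ideas for the small-Maslov regime. In short: your outline correctly identifies where the difficulty lies, but does not resolve it, and the conjecture remains open.
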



\end{document}